\documentclass[12pt]{amsart}

\usepackage{amsmath}
\usepackage{amssymb}
\usepackage{amsthm}
\usepackage{amscd}
\usepackage{xypic}
\usepackage{delarray}

\textwidth=14.00cm \textheight=22cm \topmargin=0.00cm
\headsep=1cm \numberwithin{equation}{section}
\hyphenation{semi-stable} \emergencystretch=10pt


\newtheorem{theorem}{Theorem}[section]
\newtheorem{lemma}[theorem]{Lemma}
\newtheorem{proposition}[theorem]{Proposition}

\theoremstyle{definition}
\newtheorem{definition}[theorem]{Definition}
\newtheorem{remark}[theorem]{Remark}

\newtheorem{convention and reminder}[theorem]{Convention and Reminder}
\newtheorem{convention and remark}[theorem]{Convention and Remark}
\newtheorem{definition and remark}[theorem]{Definition and Remark}

\newtheorem{reminders and definition}[theorem]{Reminders and Definition}

\newtheorem{notation and remarks}[theorem]{Notation and Remarks}
\newtheorem{notation and remark}[theorem]{Notation and Remark}

\def\P{{\mathbb P}}

\newcommand\Ker{\operatorname{\Ker}}

\title[Completely decomposable defining equations of points]{On completely decomposable defining equations of points in general position in $\P^n$}

\begin{document}

\author{Jaeheun Jung}
\address{Department of Mathematics, Korea University, Seoul 136-701, Korea}
\email{wodsos@korea.ac.kr}

\author{Euisung Park}
\address{Department of Mathematics, Korea University, Seoul 136-701, Korea}
\email{euisungpark@korea.ac.kr}

\date{Seoul, \today}

\subjclass[2010]{14N05, 51N35, 14C17}

\keywords{Defining equations of a finite set, Linearly general
    position, Completely decomposable form}

\maketitle \thispagestyle{empty}

\begin{abstract}
The study of the defining equations of a finite set in linearly general position has been actively attracted since it plays a significant role in understanding the defining equations of arithmetically Cohen-Macaulay varieties. In \cite{T}, R. Treger proved that $I(\Gamma)$ is generated by forms of degree $\leq \lceil \frac{|\Gamma|}{n}\rceil$. Since then, Treger’s result have been extended and improved in several papers.

The aim of this paper is to reprove and improve the above Treger’s result from a new perspective. Our main result in this paper shows that $I(\Gamma)$ is generated by the union of $I(\Gamma)_{\leq \lceil \frac{|\Gamma|}{n}\rceil -1}$ and the set of all completely decomposable forms of degree $\lceil \frac{|\Gamma|}{n}\rceil$ in $I(\Gamma)$. In particular, it holds that if $d \leq 2n$ then $I(\Gamma)$ is generated by quadratic equations of rank $2$. This reproves Saint-Donat’s results in \cite{SD1} and \cite{SD2}.
\end{abstract}

\section{Introduction}
\noindent Let $X \subset \P^r$ be a nondegenerate irreducible projective
variety defined over an algebraically closed field $k$ of arbitrary
characteristic. Let $d$ and $n$ denote respectively the degree and
the codimension of $X$ in $\P^r$. It is well known that if
$\rm{char}(k)=0$ and $\Lambda =\P^n$ is a general $n$-dimensional
subspace of $\P^r$, then the scheme-theoretic intersection $X \cap
    \Lambda \subset \P^n$ is a reduced finite set of $d$ points \textit{in
    linearly general position}; that is, any $n+1$ points of $X \cap
    \Lambda$ spans the whole space $\Lambda$. For this reason, finite sets of
points in linearly general position have played a significant role
in finding an upper bound for the degrees of the defining equations
of $X \subset \P^r$ when it is arithmetically Cohen-Macaulay.
Keeping this approach in mind, let
\begin{equation*}
\Gamma \subset \P^n
\end{equation*}
be a finite set of $d$ points in linearly general position. Also let $S$
and $I(\Gamma)$ be respectively the homogeneous coordinate ring $k[x_0 ,x_1 , \ldots , x_n ]$ of $\P^n$ and the homogeneous ideal of $\Gamma$ in $S$. Then one can easily show that $I(\Gamma)$ is generated by forms of degree $\leq m+1$. Indeed, this
comes from the fact that $\Gamma$ is $m$-normal and hence $(m+1)$-regular in the sense of Castelnuovo-Mumford (cf. \cite[Proposition 1.1]{GM}). Maybe the first result improving this elementary fact is due to B. Saint-Donat, who proved in \cite{SD1} that $I(\Gamma)$ is generated by quadratic equations if $d \leq 2n$. Later, by using an old argument essentially due to K. Petri in \cite{Pe} and B. Saint-Donat, R. Treger in \cite{T} has shown that if $d \leq mn$, then $I(\Gamma)$ is generated by forms of degree $\leq m$. Since then, Treger's result have been extended and improved in several papers (cf. \cite{MV}, \cite{TV}, etc).

The aim of this paper is to reprove and improve the above Treger's
result from a new perspective. To state our result precisely, we
require some notation and definitions.

\begin{definition}
$(1)$ A homogeneous polynomial in $S = k[x_0 , x_1 , \ldots , x_n ]$ is said to be \textit{completely decomposable} if it can be written as a product of linear forms.

\noindent $(2)$ Let $\Gamma \subset \P^n$ be a finite set and $\ell \geq 1$ an integer. Then we denote by $\Phi (\Gamma)_{\ell}$ the set of all completely decomposable polynomials in $I(\Gamma)_{\ell}$. That is,
\begin{equation*}
\Phi (\Gamma)_{\ell} = I(\Gamma)_{\ell} \cap \{ h_1 \cdots h_{\ell} ~|~ h_1 , \ldots , h_{\ell} \in S_1 - \{0\} \}.
\end{equation*}
\end{definition}
Now, let $\Gamma \subset \P^n$ be a finite set of $d$ points in linearly general position. When $d \leq mn$, one can easily create the elements in $\Phi (\Gamma)_m$.
Namely, let
\begin{equation*}
    \Gamma = \Gamma_1 \cup \cdots \cup \Gamma_m
\end{equation*}
be a partition of $\Gamma$ such that $|\Gamma_i | \leq n$ for all $1 \leq i \leq m$. Then
\begin{equation*}
    \{ h_1 \cdots h_m ~|~ h_i \in I(\Gamma_i )_1 - \{0\} \quad \mbox{for
        each} \quad 1 \leq i \leq m \}
\end{equation*}
is a nonempty subset of $\Phi (\Gamma)_m$. Any element in
$\Phi (\Gamma)_m$ can be constructed by this method. Note that $\Gamma$ is cut out by $\Phi (\Gamma)_m$ set-theoretically. For example, see the proof of \cite[Theorem 1.4]{H} where it is shown that if $d \leq 2n$ then the common zero set of $\Phi (\Gamma)_2$ is exactly equal to $\Gamma$. Along this line, one can naturally ask whether $\Gamma$ is ideal-theoretically cut out by $\Phi (\Gamma)_m$ and $I(\Gamma)_{\leq m-1}$. In \cite{SD1} and \cite{SD2}, B. Saint-Donat proved that if $d \leq 2n$ then $I(\Gamma)$ is generated by $\Phi (\Gamma)_2$, or equivalently, $I(\Gamma)$ is generated by quadratic equations of rank $2$. By applying this result to curves, he also proved that if $\mathcal{C} \subset \P^{n+1}$ is a linearly normal projective integral curve of arithmetic genus $g$ and degree $d \geq 2g+2$ then $I(\mathcal{C})$ is generated by quadratic equations of rank $3$ and $4$.

Along this line, our main result in the present paper is the following theorem.

\begin{theorem}\label{thm:main}
Let $\Gamma \subset \P^n$ be a finite set of $d$ points in linearly general position. If $d \leq mn$, then
\begin{equation*}
I(\Gamma) = \langle I(\Gamma)_{\leq m-1} ,\Phi (\Gamma)_m \rangle.
\end{equation*}
In particular, $I(\Gamma)$ is generated by forms of degree $\leq m$.
\end{theorem}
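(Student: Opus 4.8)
The plan is to prove, by induction on $m$, the two assertions
\[
\mathrm{(a)}\quad I(\Gamma)_m=S_1\cdot I(\Gamma)_{m-1}+\langle\Phi(\Gamma)_m\rangle_k,\qquad
\mathrm{(b)}\quad I(\Gamma)_{m+1}=S_1\cdot I(\Gamma)_m
\]
(where $\langle\ \rangle_k$ denotes $k$-linear span) for every $\Gamma\subset\P^n$ in linearly general position with $|\Gamma|=d\le mn$. These yield the theorem: $I(\Gamma)$ is generated in degrees $\le m+1$ by the regularity bound recalled in the introduction, $\mathrm{(b)}$ sharpens this to degrees $\le m$, and then $\mathrm{(a)}$ — together with the trivial fact that the degree $\le m-1$ part of $\langle I(\Gamma)_{\le m-1},\Phi(\Gamma)_m\rangle$ is $I(\Gamma)_{\le m-1}$ — gives $I(\Gamma)=\langle I(\Gamma)_{\le m-1},\Phi(\Gamma)_m\rangle$. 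One may assume $d>n$, so $m\ge 2$; and if $d\le(m-1)n$ the assertions for $m$ reduce at once to those for $m-1$, since the ideal in question is squeezed between $\langle I(\Gamma)_{\le m-2},\Phi(\Gamma)_{m-1}\rangle$ and $I(\Gamma)$. So I would concentrate on the step $(m-1)n<d\le mn$ with $m\ge 3$, reserving the base case $m=2$ (which is Saint-Donat's theorem) for the end.

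In the step I would pick $n$ of the points, $Z\subset\Gamma$; by linear general position $Z$ spans a hyperplane $H=\{\ell=0\}$, and $\Gamma_1:=\Gamma\setminus Z$ is disjoint from $H$, lies in linearly general position, and has $|\Gamma_1|=d-n\le(m-1)n$ points. Choosing coordinates with $\ell=x_0$ so that $Z$ is the set of coordinate points of $H=\{x_0=0\}$, the ideal $I_H(Z)$ is generated by the rank-two quadrics $x_ix_j$ $(1\le i<j\le n)$, so $I_H(Z)_\nu$ is spanned by the degree-$\nu$ monomials in $x_1,\dots,x_n$ involving at least two variables — in particular by completely decomposable forms vanishing on $Z$. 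Three inputs feed the argument. First, $\Gamma_1$ is $e$-normal for all $e\ge m-1$ (the estimate underlying the $m$-normality recalled in the introduction: $d'$ points in linearly general position impose independent conditions on forms of degree $e$ once $d'\le en+1$, used with $d'=d-n$); hence the restriction $I(\Gamma)_\nu\to I_H(Z)_\nu$ is surjective for $\nu\ge m$, with kernel $(\ell)\cap I(\Gamma)_\nu=\ell\cdot I(\Gamma_1)_{\nu-1}$. Second, by induction $I(\Gamma_1)=\langle I(\Gamma_1)_{\le m-2},\Phi(\Gamma_1)_{m-1}\rangle$, so $I(\Gamma_1)$ is generated in degrees $\le m-1$ and $I(\Gamma_1)_{m-1}=S_1\cdot I(\Gamma_1)_{m-2}+\langle\Phi(\Gamma_1)_{m-1}\rangle_k$. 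Third, $\ell\cdot\Phi(\Gamma_1)_{m-1}\subseteq\Phi(\Gamma)_m$ and $\ell\cdot I(\Gamma_1)_\nu\subseteq I(\Gamma)_{\nu+1}$, since $\ell$ vanishes on $Z$.

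Assertion $\mathrm{(b)}$ is then easy: for $f\in I(\Gamma)_{m+1}$, write its restriction $\bar f\in I_H(Z)_{m+1}$ as $\sum_i\bar x_i\bar g_i$ with $\bar g_i\in I_H(Z)_m$ (possible since $I_H(Z)$ is generated by quadrics), lift each $\bar g_i$ to $g_i\in I(\Gamma)_m$ by surjectivity, so that $\sum_i x_ig_i\in S_1\cdot I(\Gamma)_m$ restricts to $\bar f$; then $f-\sum_i x_ig_i\in\ell\cdot I(\Gamma_1)_m=\ell\cdot S_1\cdot I(\Gamma_1)_{m-1}\subseteq S_1\cdot I(\Gamma)_m$. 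For $\mathrm{(a)}$ the only real issue is to show, for a given $f\in I(\Gamma)_m$, that there is $\phi\in\langle\Phi(\Gamma)_m\rangle_k$ with $\bar\phi=\bar f$; once this is done, $f-\phi\in(\ell)\cap I(\Gamma)_m=\ell\cdot I(\Gamma_1)_{m-1}$, and decomposing $I(\Gamma_1)_{m-1}$ by the second input and multiplying by $\ell$, the part from $S_1\cdot I(\Gamma_1)_{m-2}$ lands in $S_1\cdot I(\Gamma)_{m-1}$ and the part from $\langle\Phi(\Gamma_1)_{m-1}\rangle_k$ lands in $\langle\Phi(\Gamma)_m\rangle_k$ by the third input, so $f\in S_1\cdot I(\Gamma)_{m-1}+\langle\Phi(\Gamma)_m\rangle_k$.

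Since $\bar f$ runs over all of $I_H(Z)_m$ as $f$ runs over $I(\Gamma)_m$, the heart of the matter — and the step I expect to be hardest — is to prove that \emph{the restriction to $H$ of $\langle\Phi(\Gamma)_m\rangle_k$ is all of $I_H(Z)_m$}. Because $I_H(Z)_m$ is spanned by the monomials $x^a=x_1^{a_1}\cdots x_n^{a_n}$ of degree $m$ with at least two exponents positive, it suffices to realize each such $x^a$ as a $k$-linear combination of restrictions of products $h_1\cdots h_m$ with $h_j\in S_1\setminus\{0\}$ and $\bigcup_jV(h_j)\supseteq\Gamma$. I would produce these by partitioning $\Gamma=\Gamma^{(1)}\sqcup\cdots\sqcup\Gamma^{(m)}$ with $|\Gamma^{(j)}|\le n$ and choosing $h_j$ through $\Gamma^{(j)}$: the restriction $\bar h_j$ then vanishes at the coordinate points $Z\cap\Gamma^{(j)}$, so distributing the points of $Z$ among the parts dictates which coordinate variables each $\bar h_j$ may involve, while the points of $\Gamma_1$ are absorbed by the rotational freedom left in $h_j$ once $\bar h_j$ is pinned down — and this is precisely where linear general position of $\Gamma_1$ enters. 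No single partition realizes a prescribed $x^a$ on the nose; the technical core is to show that, running over enough partitions and taking combinations, the restricted decomposable forms span $I_H(Z)_m$, which I would try to establish by a secondary induction (on $m$, or on the monomial by peeling off one variable at a time). Finally, the base case $m=2$ follows the same outline, with the explicit description of the ideal of at most $n$ points in general linear position — the linear forms cutting out their span, together with the rank-two quadrics in the coordinates of that span — used in place of the inductive hypothesis for $\Gamma_1$; this is how Saint-Donat's result is recovered.
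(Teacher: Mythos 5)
Your reduction scheme is attractive and genuinely different from the paper's (which never restricts to a hyperplane), and the bookkeeping parts of it are correct: splitting off $n$ points $Z\subset\Gamma$ spanning $H=\{\ell=0\}$, the exact sequence $0\to \ell\cdot I(\Gamma_1)_{\nu-1}\to I(\Gamma)_\nu\to I_H(Z)_\nu\to 0$ for $\nu\ge m$ does hold by the normality estimate, your assertion (b) follows as you say, and $\ell\cdot\Phi(\Gamma_1)_{m-1}\subseteq\Phi(\Gamma)_m$ correctly transports the inductive information from $\Gamma_1$. But the step you yourself flag as the heart of the matter --- that the restrictions to $H$ of the forms in $\Phi(\Gamma)_m$ span $I_H(Z)_m$ --- is not proved, and it is not a technicality: it carries essentially the entire weight of the theorem. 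Moreover the mechanism you propose for it is flawed. Once $\bar h_j$ is pinned down, the only remaining freedom in $h_j$ is the coefficient of $\ell=x_0$, a single scalar, so $h_j$ can in general be made to vanish at no more than one further point of $\Gamma_1$; there is no ``rotational freedom'' absorbing several points. In the extremal case $d=mn$ (which your induction must pass through, since $(m-1)n<d\le mn$ is the essential range) every block of the partition has exactly $n$ points, each $h_j$ is determined up to scalar, and $\Phi(\Gamma)_m$ is a finite set of completely rigid forms, one for each partition of $\Gamma$ into $m$ blocks of size $n$. Showing that this finite rigid set spans is precisely the difficulty, and your sketch (``running over enough partitions and taking combinations \ldots a secondary induction'') contains no argument for it. The deferred base case $m=2$ has the same unproved core, and it is Saint-Donat's theorem --- the very statement the paper sets out to reprove --- so it cannot simply be waved through ``the same outline.''

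For comparison, the paper resolves exactly this point by a projective-geometric counting argument rather than a combinatorial one: for $d=mn$ it observes that $\P(I(\Gamma)_m)$ is a linear subspace of $\P(S_m)$ of dimension equal to the codimension of the split variety $\mathrm{Split}_m(\P^n)$, meeting it in exactly $\deg\,\mathrm{Split}_m(\P^n)$ points (Lemma 2.4), pulls this back to the Segre product $(\P^n)^m$, and applies the fact that a locally Cohen--Macaulay variety cut by a linear space of complementary dimension in deg-many points spans that space (Proposition 2.2); the case $d<mn$ is then obtained by adding auxiliary points (Lemma 3.2), and the degree-$(m{+}1)$ statement by a separate induction on $\mu_\Gamma(F)$. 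Unless you either import that spanning theorem or supply a genuinely new proof of your restriction claim, your proposal has a real gap at its center.
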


Obviously, this result reproves Saint-Donat's result and improves
Treger's result. That is, if $m=2$ then Theorem \ref{thm:main} says that $I(\Gamma)$ is generated by $\Phi (\Gamma)_2 $. Thus Theorem \ref{thm:main} reproves Saint-Donat's result in \cite{SD1} and \cite{SD2}. Also for arbitrary $m \geq 2$, Theorem \ref{thm:main} says that $I(\Gamma)$ is generated by $I(\Gamma)_{\leq m}$ and some completely decomposable $m$-forms spans $I(\Gamma)_m$ as a $k$-vector space. Hence it improves
Treger's result in \cite{T}.

The proof of Theorem \ref{thm:main} uses a completely different idea from Saint-Donat's and Treger's approaches. More precisely, we first prove that $I(\Gamma)_m$ is generated by $\Phi (\Gamma)_m$ as a $k$-vector space by using some geometric properties of \textit{the variety of completely decomposable $m$-forms}
\begin{equation*}
\mbox{Split}_m (\P^n) \subset \P (S_m ) = \P^{{n+m} \choose {n} -1}
\end{equation*}
(cf. \cite{AB} and \cite{A}). Indeed, regarding $\P (I(\Gamma)_m )$ as a subspace of  $\P (S_m )$, the desired statement that $\Phi (\Gamma)_m$ generates $I(\Gamma)_m$ as a $k$-vector space is equivalent to that the intersection
\begin{equation*}
[\Phi (\Gamma)_m ] := \mbox{Split}_m (\P^n) \cap \P (I(\Gamma)_m ) = \{ [F] ~|~ F \in \Phi (\Gamma)_m \}
\end{equation*}
is a nondegenerate subset of $\P (I(\Gamma)_m )$. Briefly speaking, the second statement can be shown by using a few projective geometric properties of the split variety $\mbox{Split}_m (\P^n)$. For details, see $\S~2$ and $\S~3$. Then, in $\S~4$ and $\S~5$, we prove that $I(\Gamma)_{m+1}$ is equal to the $k$-vector space generated by the set
\begin{equation*}
S_1 \Phi (\Gamma)_{m} := \{ h F ~|~ h \in S_1 \quad \mbox{and} \quad F \in \Phi (\Gamma)_{m} \}.
\end{equation*}
This completes the proof of Theorem \ref{thm:main} since we know already that $\Gamma$ is $(m+1)$-regular in the sense of Castelnuovo-Mumford.\\

\noindent {\bf Acknowledgement.} The second named author was
supported by the Korea Research Foundation Grant funded by the
Korean Government (NRF-2016R1D1A1A09918462).

\section{Completely decomposable equations I : $d=mn$ case}
\noindent Let $\Gamma \subset \P^n$ be a finite set of $d$ points in linearly general position. Through this section and the next section, we will show that if $d \leq \ell n$ then $I(\Gamma)_{\ell}$ is, as a $k$-vector space, generated by its subset $\Phi (\Gamma)_{\ell}$ consisting of all completely decomposable $\ell$-forms.

This section is devoted to giving a proof of the following theorem.

\begin{theorem}\label{thm:d=mn}
Let $\Gamma \subset \P^n$ be a finite set of $d=mn$ points in
linearly general position for some $m \geq 2$. Then $\Phi
(\Gamma)_m$ generates $I(\Gamma)_m$ as a $k$-vector space.
\end{theorem}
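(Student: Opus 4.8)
The plan is to work projectively inside $\P(S_m)$ and to show that $[\Phi(\Gamma)_m]=\mathrm{Split}_m(\P^n)\cap\P(I(\Gamma)_m)$ spans $\P(I(\Gamma)_m)$; by the dictionary recalled in the introduction this is equivalent to the stated generation result. First I would record the basic numerology when $d=mn$: a partition $\Gamma=\Gamma_1\cup\cdots\cup\Gamma_m$ into pieces of size exactly $n$ produces, for each choice of one nonzero linear form $h_i$ vanishing on $\Gamma_i$ (which is unique up to scalar since $n$ points in linearly general position in $\P^n$ lie on a unique hyperplane), a completely decomposable form $h_1\cdots h_m\in\Phi(\Gamma)_m$; and every element of $\Phi(\Gamma)_m$ arises this way, since a product of linear forms vanishing on $\Gamma$ must partition $\Gamma$ among its factors. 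So $[\Phi(\Gamma)_m]$ is the image, under the multiplication (Segre–type) map, of the finite set of such partitions — a priori a finite set of points.

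Next I would set up an induction on $m$. For $m=2$ the claim is Saint-Donat's theorem, but I would prefer to prove the base case directly so the argument is self-contained: with $d=2n$, pick a point $p\in\Gamma$, let $H$ be the unique hyperplane through $\Gamma\setminus\{p\}$ is not available (that set has $2n-1>n$ points), so instead one argues on the vector-space dimension $\dim I(\Gamma)_2$ and exhibits enough rank-$2$ quadrics by varying the partition. The cleaner route, and the one I would actually carry out, is a degeneration/specialization inside $\mathrm{Split}_m(\P^n)$: if $[\Phi(\Gamma)_m]$ failed to span $\P(I(\Gamma)_m)$, it would be contained in a hyperplane $\P(W)$ with $W\subsetneq I(\Gamma)_m$, i.e. there is a nonzero linear functional $\lambda$ on $S_m$ vanishing on every $h_1\cdots h_m$ with $h_i\in I(\Gamma_i)_1$, for every admissible partition. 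I would then fix all but one block, say keep $h_1,\dots,h_{m-1}$ attached to $\Gamma_1,\dots,\Gamma_{m-1}$ and let the last block range over partitions of the remaining $n$ points; the vanishing of $\lambda$ becomes a linear condition on a single linear form $h_m$ running over $I(\Gamma_m)_1$, and by moving which $n$ points constitute $\Gamma_m$ (equivalently, swapping points between $\Gamma_m$ and one of the other blocks) one generates enough linear forms to force $\lambda$ to annihilate $h_1\cdots h_{m-1}\cdot S_1$-worth of forms, which then feeds back into an induction on the codimension $n$ or on $m$.

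The key geometric input I would invoke about $\mathrm{Split}_m(\P^n)$ is that it is irreducible, nondegenerate in $\P(S_m)$, cut out (or at least its secant behaviour controlled) by the relevant ideals from \cite{AB,A}, and — crucially — that its tangent spaces are understood: the affine tangent space to $\mathrm{Split}_m(\P^n)$ at $h_1\cdots h_m$ is $\sum_i \big(\prod_{j\ne i}h_j\big)\cdot S_1$. This description lets me translate "$[\Phi(\Gamma)_m]$ is degenerate in $\P(I(\Gamma)_m)$" into a statement that the sum of these tangent contributions, over all partition-points, still misses a hyperplane of $I(\Gamma)_m$, and then derive a contradiction by a dimension count using that $\Gamma$ is $m$-normal (so $\dim I(\Gamma)_m = \binom{n+m}{n}-mn$ is known exactly).

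The main obstacle, I expect, is precisely this last dimension-count/contradiction step: one must show that the span of the tangent spaces of $\mathrm{Split}_m(\P^n)$ at the finitely many partition-points of $\Gamma$ already exhausts $I(\Gamma)_m$, and this requires a careful combinatorial bookkeeping of how many independent linear forms $I(\Gamma_i)_1$ contributes as the partition varies, together with the linear-general-position hypothesis to guarantee that the hyperplanes $\{h_i=0\}$ are "as independent as possible." I would organize this via induction on $m$ (reducing a size-$mn$ configuration to a size-$(m-1)n$ one by peeling off a block of $n$ collinear-in-the-weak-sense points and quotienting by its hyperplane), so that the genuinely new content is isolated in the single inductive step rather than repeated.
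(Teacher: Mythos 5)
Your setup is the same as the paper's: reduce the statement to the claim that the finite set $[\Phi(\Gamma)_m]=\mathrm{Split}_m(\P^n)\cap\P(I(\Gamma)_m)$ spans $\P(I(\Gamma)_m)$, using $m$-normality to pin down $\dim I(\Gamma)_m=\binom{n+m}{n}-mn$, and identify $\Phi(\Gamma)_m$ with the partitions of $\Gamma$ into $m$ blocks of size $n$. But the proposal never actually proves the spanning claim, and you say so yourself (``the main obstacle, I expect, is precisely this last dimension-count/contradiction step''). Worse, the two routes you sketch toward it do not work as stated. The ``fix all blocks but one and vary $h_m$'' idea stalls immediately: since each block $\Gamma_i$ has exactly $n$ points in linearly general position, $I(\Gamma_i)_1$ is one-dimensional, so there is no continuous family of linear forms to vary; the only freedom is the discrete one of re-partitioning $\Gamma$, and you give no bookkeeping showing that these finitely many products force the functional $\lambda$ to vanish on all of $I(\Gamma)_m$. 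The tangent-space route is based on an incorrect translation: degeneracy of the finite set $[\Phi(\Gamma)_m]$ means only that the \emph{points} lie in a hyperplane of $\P(I(\Gamma)_m)$; it says nothing about the tangent spaces of $\mathrm{Split}_m(\P^n)$ at those points, which in general are not contained in $\P(I(\Gamma)_m)$ at all, so no contradiction can be extracted from the tangent description $\sum_i\bigl(\prod_{j\neq i}h_j\bigr)S_1$ without substantial further argument. The proposed induction on $m$ (``peel off a block and quotient by its hyperplane'') is likewise only named, not carried out, and the base case $m=2$ is left as ``exhibit enough rank-$2$ quadrics.''

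The missing idea in the paper is quite different and is what makes the theorem fall out cleanly: lift everything to the Segre variety $\sigma((\P^n)^m)\subset\P^{(n+1)^m-1}$, of which $\mathrm{Split}_m(\P^n)$ is an isomorphic-up-to-finite linear projection from a center $M$. Setting $\Lambda=\langle\P(I(\Gamma)_m),M\rangle$, one checks that $\dim\Lambda$ equals the codimension of the Segre variety and that $\Lambda\cap\sigma((\P^n)^m)=\varphi^{-1}([\Phi(\Gamma)_m])$ consists of exactly $m!\cdot\frac{(mn)!}{(n!)^m\,m!}=\deg\sigma((\P^n)^m)$ points (this is the partition count you did obtain). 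Then a general fact (the paper's Proposition on locally Cohen--Macaulay varieties, proved by projecting from a codimension-one subspace of the putative smaller span and comparing $\deg f\cdot\deg Y=d$ with the fiber-degree bound) says that when a nondegenerate locally Cohen--Macaulay variety of degree $d$ and codimension $c$ meets a $\P^c$ in a finite set of $d$ points, those points span the $\P^c$. Applied to the Segre variety and $\Lambda$, this yields the spanning statement at once, and hence the theorem. So the gap in your proposal is exactly the step this degree-versus-codimension argument was designed to close; without it (or a genuinely worked-out substitute), the proof is not complete.
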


To this aim, we begin with the following well-known fact. We will give a brief proof due to the lack of references.

\begin{proposition}\label{prop:lCM flatness}
Let $X\subseteq \P^r$ be a nondegenerate irreducible projective variety of degree $d$ and codimension $c \geq 1$. Suppose that $X$ is locally Cohen-Macaulay. If $\Lambda= \P^c$ is a subspace of $\P^r$ such that $X \cap \Lambda$ is a finite set of $d$ points, then $X \cap \Lambda$ spans $\Lambda$.
\end{proposition}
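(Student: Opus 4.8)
The plan is to prove Proposition~\ref{prop:lCM flatness} by reducing to the classical fact that a zero-dimensional scheme of degree $d$ spanning a space of dimension $\geq d$ forces a nontrivial linear form vanishing on it, and then deriving a contradiction with degree being preserved under hyperplane sections of a locally Cohen--Macaulay variety. More precisely, suppose for contradiction that $X \cap \Lambda$ does not span $\Lambda = \P^c$; then it is contained in a hyperplane $H$ of $\Lambda$, hence in a hyperplane section $\Lambda \cap H \cong \P^{c-1}$ of $\Lambda$. The idea is to produce, via induction on the codimension, a chain $\Lambda = \Lambda_c \supset \Lambda_{c-1} \supset \cdots \supset \Lambda_1 \supset \Lambda_0$ with $\dim \Lambda_i = i$ and $X \cap \Lambda_i = X \cap \Lambda$ as \emph{sets}, which would be absurd once $i < \#(X\cap\Lambda)$ unless the points collapse --- so the real content is scheme-theoretic multiplicity bookkeeping, and one has to argue at the level of degrees of the successive intersection schemes.

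The key technical input is the following: if $X$ is locally Cohen--Macaulay of pure dimension $c' := \dim X = r - c$ and $L$ is a general (or suitably chosen) hyperplane containing $\Lambda$ --- equivalently, since $X\cap\Lambda$ is finite we are slicing $X$ down by $c$ hyperplanes whose common intersection is $\Lambda$ --- then each successive hyperplane section stays of the expected pure dimension and the same degree $d$, because a hyperplane section of a locally Cohen--Macaulay scheme by a hyperplane not containing any component is again locally Cohen--Macaulay and has the same degree (no drop, no embedded junk appears). Concretely, write $\Lambda = H_1 \cap \cdots \cap H_c$; since $X \cap \Lambda$ is finite, $X$ has dimension $c$ and $H_1, \ldots, H_c$ is a system of parameters on $X$ near each point of $X \cap \Lambda$ --- here local Cohen--Macaulayness is exactly what guarantees this is a regular sequence on the local rings, so the zero-dimensional scheme $X \cap \Lambda$ has length $\sum_{P} e(\mathcal{O}_{X,P})$, and since $X$ is irreducible of degree $d$ and $H_1,\dots,H_c$ cut it down to finitely many points with no excess intersection, Bézout-type / associativity-of-multiplicity arguments give $\operatorname{length}(X \cap \Lambda) = d$.

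Granting that $X \cap \Lambda$ is a length-$d$ scheme supported on $d$ distinct reduced points (reducedness of the points is part of the hypothesis, as they are ``$d$ points''), the span argument is then immediate: $d$ reduced points span a space of projective dimension at least... well, not in general, but here we must be cleverer. The cleanest route: if $X \cap \Lambda$ spanned only a proper subspace $\Lambda' \subsetneq \Lambda$, pick $\Lambda' = \P^{c-1}$ a hyperplane of $\Lambda$ containing all $d$ points. Then $\Lambda'$ is the intersection of $\Lambda$ with one further hyperplane $H_{c+1}$, and $X \cap \Lambda' = X \cap \Lambda$ still has length $d$ supported on the same $d$ points. But now $X \cap \Lambda'$ is a finite subscheme of the variety $X$ cut out by $c+1$ hyperplanes $H_1, \ldots, H_{c+1}$; restricting $H_{c+1}$ to the irreducible $c$-dimensional $X$, it vanishes on the $c$-dimensional variety $X$ only if $X \subseteq H_{c+1}$, which is impossible since $X \cap H_{c+1} \supseteq X\cap\Lambda'$ is finite and nonempty while $\dim X = c \geq 1$; so $H_{c+1}|_X$ is a nonzero divisor, hence $X \cap H_{c+1}$ is a hypersurface section of degree $d$ in the pure $(c-1)$-dimensional sense, and intersecting further with $H_1, \ldots, H_c$ would force $X \cap \Lambda'$ to have dimension $\geq -1$ with a strict degree drop --- contradiction with length $=d>0$ only if...

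The main obstacle, and where I would spend the most care, is this last step: turning ``the points lie in a hyperplane of $\Lambda$'' into a numerical contradiction. The honest argument is a dimension count on the successive linear sections: after intersecting the $c$-dimensional locally Cohen--Macaulay variety $X$ with $i$ general hyperplanes one gets a pure $(c-i)$-dimensional locally Cohen--Macaulay scheme of degree $d$; the $c$-th section $X \cap \Lambda$ then has degree (= length) exactly $d$; if the $d$ distinct reduced points lay in a $\P^{c-1}$, a \emph{further} hyperplane section by $H_{c+1} \cap \cdots$ would be forced to still see all $d$ points but now as a subscheme of $\dim \leq c - (c+1) = -1$, which is empty --- contradiction, since the points are actually there. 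To make ``$X \cap \Lambda \subseteq H_{c+1}$'' rigorous one notes $X \cap (H_1 \cap \cdots \cap H_{c+1})$ is cut out of $X$ by $c+1$ equations and has dimension $\geq \dim X - (c+1) = -1$ forced to be $\emptyset$ \emph{only when the equations form a regular sequence}, but here they do not --- they have a common zero, the $d$ points --- and local Cohen--Macaulayness says a non-regular-sequence of $c+1$ hyperplanes on a $c$-dimensional $X$ must have the $(c+1)$-st equation be a zerodivisor on $\mathcal{O}_{X}/(H_1,\ldots,H_c)$; but that quotient is the Artinian ring of the $d$ points, all of whose zerodivisors vanish at some point, and a \emph{linear} form vanishing at all $d$ points and yet forming a non-system-of-parameters is exactly the statement that the points are degenerate in $\Lambda$. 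So in fact the proof is really the contrapositive of a standard fact --- ``a complete intersection of $c+1$ hyperplanes in $\P^r$ meeting an irreducible $c$-fold $X$ of degree $d$ along a finite set has that finite set spanning'' --- and I expect the write-up to consist of (i) the flatness/degree-preservation lemma via local Cohen--Macaulayness, and (ii) an induction on $c$: project $X\cap\Lambda$ from a point of $X\cap\Lambda$ not on the hyperplane (if the points were degenerate), contradicting that the image is again $d-1$ points of degree $d-1$ in linearly general position in $\P^{c-1}$, which is the inductive hypothesis --- with the base case $c = 1$ being trivial since $d$ points spanning $\P^1$ just means $d \geq 2$, forced by $X \cap \Lambda \neq \emptyset$ of degree $d \geq 2$.
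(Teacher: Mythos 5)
Your local bookkeeping contains slips (since $\operatorname{codim} X=c$ we have $\dim X=r-c$, and $\Lambda=\P^c$ is cut out by $r-c$ hyperplanes, not $c$), but the more serious problem is the step you yourself flag as the main obstacle: it does not work. Knowing that local Cohen--Macaulayness makes the linear forms cutting $\Lambda$ a regular sequence at each point of $X\cap\Lambda$, so that the scheme $X\cap\Lambda$ has length exactly $d$ by B\'ezout, says nothing at all about spanning. Your attempted contradiction --- cut by one more hyperplane $H$ through the $d$ points and conclude the intersection must be empty because its ``expected dimension'' is $-1$ --- is fallacious: the bound $\dim \geq \dim X - (\text{number of equations})$ is a lower bound, and special hyperplanes through the points can perfectly well leave a nonempty finite set. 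Your repair via local algebra is circular: saying that $H$ is a zerodivisor on the Artinian quotient is exactly the statement that $H$ vanishes at some of the points, i.e.\ a restatement of the assumed degeneracy, not a contradiction. The final inductive sketch (project from one of the points, claim the image is $d-1$ points ``in linearly general position'') appeals to a statement that is neither the proposition nor available as an inductive hypothesis. The telling symptom is that your argument never uses nondegeneracy of $X$ in $\P^r$, and without it the proposition is false: put $X$ inside a hyperplane $\P^{r-1}$ and take $\Lambda=\P^c$ meeting $\P^{r-1}$ along a $\P^{c-1}$ that cuts $X$ in $d$ points; then $X\cap\Lambda$ spans only that $\P^{c-1}$.

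The paper's proof supplies exactly the missing global ingredient, and it is a projection argument, not a linear-section argument. If the span $N$ of $X\cap\Lambda$ were a proper subspace $\P^{c-t}$ of $\Lambda$ with $t\geq 1$, choose $M=\P^{c-t-1}\subset N$ avoiding the finitely many points; since $M\subset\Lambda$, we get $M\cap X=\emptyset$, so projection from $M$ restricts to a finite morphism $f:X\to\P^{r-c+t}$. Miracle flatness (a finite morphism from a locally Cohen--Macaulay scheme to a smooth one is flat; this is where the CM hypothesis enters, via the cited Eisenbud exercise) makes every fiber of $f$ have length $\deg f$. The fiber over the point $f(N-M)$ contains all $d$ points of $X\cap\Lambda$, so $\deg f\geq d$; combined with $d=\deg f\cdot\deg f(X)$ this forces $\deg f=d$ and $\deg f(X)=1$, so $f(X)$ is a linear space and $X\subseteq\langle M, f(X)\rangle=\P^{r-t}$, contradicting nondegeneracy. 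Some such global degree/projection argument is indispensable; the multiplicity bookkeeping you set up cannot by itself yield the spanning statement.
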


\begin{proof}
Let $N = \P^{c-t}$ be the span in $\Lambda = \P^c$ of the finite set $X \cap \Lambda$. Suppose that $t \geq 1$ and choose a subspace $M=\P^{c-t-1}$ of $N$ which does not meet with $X \cap \Lambda$. Consider the linear projection map
\begin{equation*}
{\pi}_M : \P^r - M \longrightarrow \P^{r-c+t}
\end{equation*}
from $M$. This defines a finite morphism
\begin{equation*}
f : X \longrightarrow \P^{r-c+t}.
\end{equation*}
Let $Y := f(X)$ be the image of $X$ in $\P^{r-c+t}$. Then it holds that
\begin{equation*}
d = \deg(f)\times \deg(Y)
\end{equation*}
and hence $\deg (f) \leq d$. On the other hand, note that $f(X \cap \Lambda )$ is a point. This implies that $\deg (f)$ is at least $d$ since $X$ is locally Cohen-Macaulay (cf. \cite[Exercise 18.17]{E}). Thus it is shown that $\deg (f)=d$ and $\deg (Y)=1$. In particular, $Y=\P^{r-c}$ is a linear subspace and hence $X$ is contained in the proper linear subspace $ \left\langle M,Y \right\rangle = \P^{r-t}$ of $\P^r$. Obviously, this is a contradiction since $X \subset \P^r$ is nondegenerate. In consequence, $t=0$ and hence $X \cap \Lambda$ spans $\Lambda$.
\end{proof}

Next, we will briefly recall the so-called \textit{split variety} which is also called a \textit{variety of completely decomposable forms}. A crucial connection between a split variety and the set $\Phi (\Gamma)_m$ is given in Lemma \ref{lem:deg splitting variety} below.

\begin{notation and remarks}\label{n and r:splitting variety}
Let $n$ and $\ell$ be positive integers and let $S=k[x_0 , x_1 , \ldots , x_n ]$.
\renewcommand{\descriptionlabel}[1]%
{\hspace{\labelsep}\textrm{#1}}
\begin{description}
    \setlength{\labelwidth}{13mm}
    \setlength{\labelsep}{1.5mm}
    \setlength{\itemindent}{0mm}

\item[$(1)$] Let $\P (S_{\ell} ) = \P^{{{n+\ell} \choose {n}} -1}$ be the projective space which parameterizes hypersurfaces of degree $\ell$ in $\P^n$. The subset
\begin{equation*}
\mbox{Split}_{\ell} (\P^n)= \{[F] ~|~ F \in \Phi_{\ell} \} \subset \P (S_{\ell} ).
\end{equation*}
Due to \cite{AB}, we call $\mbox{Split}_{\ell} (\P^n)$ \textit{a split variety}. In \cite{A}, this set is called \textit{the variety of completely decomposable $\ell$-forms}.

\item[$(2)$] The split variety $\mbox{Split}_{\ell} (\P^n )$ is the image of $\ell$ copies of $\P^n$ to $\P (S_{\ell})$ under the morphism
          \begin{equation*}
              \varphi : (\P^n )^{\ell} = (\P (S_1 ) )^{\ell} \rightarrow \P (S_{\ell} )
          \end{equation*}
          given by $\varphi ([H_1 ], \ldots , [H_{\ell} ] ) = [H_1 \cdots H_{\ell} ]$. Indeed, $\mbox{Split}_{\ell} (\P^n )$ is the image of $\varphi$ and
          \begin{equation*}
              \varphi : (\P^n )^{\ell}  \rightarrow \mbox{Split}_{\ell} (\P^n )
          \end{equation*}
          is a surjective finite morphism of degree $\ell !$. In consequence,
          \begin{equation*}
              \mbox{Split}_{\ell} (\P^n ) \subset \P (S_{\ell} )
          \end{equation*}
          is an $\ell n$-dimensional nondegenerate irreducible projective variety.

\item[$(3)$] Let $\sigma ((\P^n )^{\ell} ) \subset \P^{(n+1)^{\ell} -1}$ denote the Segre embedding of $(\P^n )^{\ell}$. Then $\varphi : (\P^n )^{\ell}  \rightarrow \mbox{Split}_{\ell} (\P^n )$ can be interpreted as the restriction of a linear projection
\begin{equation*}
\pi_{M} : \P^{(n+1)^{\ell} -1} - M \rightarrow \P (S_{\ell} )
\end{equation*}
to $\sigma ((\P^n )^{\ell} )$ where $M$ is subspace of $\P^{(n+1)^{\ell} -1}$ of dimension
\begin{equation*}
p (n,\ell) :=(n+1)^{\ell} - {{n+\ell} \choose {n}} -1
\end{equation*}
which does not meet with $\sigma ((\P^n )^{\ell} )$. Therefore it follows that
\begin{equation*}
\deg (\mbox{Split}_{\ell} (\P^n )) = \frac{\deg ((\P^n )^{\ell} )}{\ell !}  = \frac{(\ell n)!}{(n! )^{\ell} \times (\ell ! )} .
\end{equation*}
\end{description}
\end{notation and remarks}
\smallskip

\begin{lemma}\label{lem:deg splitting variety}
Let $\Gamma \subset \P^n$ be as Theorem \ref{thm:d=mn} and consider the subset
\begin{equation*}
[\Phi (\Gamma)_m ] := \{ [F] ~|~ F \in \Phi (\Gamma)_m \}
\end{equation*}
of $\P (S_m )$ corresponding to $\Phi (\Gamma)_m \subset S_m$. Then
\begin{equation*}
|[ \Phi (\Gamma)_m ] | = \deg ( \mbox{Split}_{m} (\P^n )).
\end{equation*}
\end{lemma}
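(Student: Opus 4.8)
The plan is to set up an explicit bijection between $[\Phi(\Gamma)_m]$ and the set of unordered partitions of the $d=mn$ points of $\Gamma$ into $m$ blocks of cardinality $n$, and then to observe that the number of such partitions is exactly the value of $\deg(\mbox{Split}_m(\P^n))$ recorded in Notation and Remarks \ref{n and r:splitting variety}. First I would analyze an arbitrary element $F \in \Phi(\Gamma)_m$. By definition $F = h_1 \cdots h_m$ with each $h_i \in S_1 - \{0\}$, and $F$ vanishes on $\Gamma$, so $\Gamma \subseteq \bigcup_{i=1}^m \{h_i = 0\}$. Since $\Gamma$ is in linearly general position and $d = mn$ with $m \geq 2$, any hyperplane of $\P^n$ meets $\Gamma$ in at most $n$ points; hence the $m$ hyperplanes $\{h_i = 0\}$ together carry at most $mn$ points of $\Gamma$, and as they carry all $d = mn$ of them, the union bound forces each $\Gamma_i := \Gamma \cap \{h_i = 0\}$ to have exactly $n$ points, no point of $\Gamma$ to lie on two of these hyperplanes, and the $h_i$ to be pairwise non-proportional. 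Thus $\{\Gamma_1, \ldots, \Gamma_m\}$ is a partition of $\Gamma$ into $m$ blocks of size $n$; and, again by linearly general position, the $n$ points of $\Gamma_i$ span a unique hyperplane, so $h_i$ is determined up to a scalar by $\Gamma_i$.

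Conversely, given any partition $\Gamma = \Gamma_1 \cup \cdots \cup \Gamma_m$ into $m$ blocks of size $n$, the linear form $h_i$ (unique up to scalar) vanishing on $\Gamma_i$ yields $F = h_1 \cdots h_m \in \Phi(\Gamma)_m$. Finally, since $S$ is a UFD and linear forms are irreducible, the factorization of $F$ into linear factors is unique up to order and scalars, so the multiset $\{[h_1], \ldots, [h_m]\}$ — equivalently the unordered partition $\{\Gamma_1, \ldots, \Gamma_m\}$ — is recovered from $[F] \in \P(S_m)$. This produces a well-defined bijection between $[\Phi(\Gamma)_m]$ and the set of unordered partitions of $\Gamma$ into $m$ blocks of size $n$.

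It then remains to count. The number of ordered $m$-tuples of $n$-element blocks partitioning a $d=mn$ element set is the multinomial coefficient $\frac{(mn)!}{(n!)^m}$, and dividing by $m!$ to forget the order of the blocks gives $\frac{(mn)!}{(n!)^m \cdot m!}$ unordered partitions. By Notation and Remarks \ref{n and r:splitting variety}(3), this is precisely $\deg(\mbox{Split}_m(\P^n))$, which gives $|[\Phi(\Gamma)_m]| = \deg(\mbox{Split}_m(\P^n))$ and completes the proof.

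I do not expect a genuine obstacle here: the argument is essentially bookkeeping. The one point demanding care is the repeated, and slightly different, uses of the linearly general position hypothesis — that any $n$ points of $\Gamma$ span exactly a hyperplane (so a block determines its hyperplane) while any $n+1$ points span all of $\P^n$ (so a hyperplane meets $\Gamma$ in at most $n$ points) — together with the pigeonhole step forcing the hyperplanes $\{h_i = 0\}$ to be pairwise distinct and to genuinely partition $\Gamma$.
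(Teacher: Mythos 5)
Your proposal is correct and follows essentially the same route as the paper: both establish a bijection between $[\Phi(\Gamma)_m]$ and the unordered partitions of $\Gamma$ into $m$ blocks of $n$ points (using linearly general position in both directions), then count $\frac{(mn)!}{(n!)^m\, m!}$ and identify this with $\deg(\mbox{Split}_m(\P^n))$ via Notation and Remarks \ref{n and r:splitting variety}(3). You merely make explicit the pigeonhole and unique-factorization details that the paper leaves implicit.
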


\begin{proof}
Since $\Gamma$ is in linearly general position, each $F = H_1 \cdots H_m$ in $\Phi (\Gamma)_m$ defines the partition
\begin{equation*}
\Gamma = (\Gamma \cap V(H_1 ) ) \cup \cdots \cup (\Gamma \cap V(H_{m} ) )
\end{equation*}
of $\Gamma$ such that $|\Gamma \cap V(H_i )| =n$ for all $1 \leq i \leq m$. Conversely, for a partition
\begin{equation*}
\Gamma = \Gamma_1 \cup \cdots \cup \Gamma_m
\end{equation*}
of $\Gamma$ such that $|\Gamma_i| =n$ for all $1 \leq i \leq m$, let $H_i \in S_1$ be an equation of the hyperplane $\langle \Gamma_i \rangle$ in $\P^n$. Then $[H_1 \cdots H_m ]$ is an element of $[ \Phi (\Gamma)_m ]$. Therefore there is a one-to-one correspondence between $[ \Phi (\Gamma)_m ]$ and the set of partitions of $\Gamma$ as the union of its $m$ subsets having exactly $n$ elements. This implies that
\begin{equation*}
|[ \Phi (\Gamma)_m ] | = \frac{(m n)!}{(n! )^{m} \times (m! )}
\end{equation*}
and hence it is equal to $\deg ( \mbox{Split}_{m} (\P^n ))$ by Notation and Remarks \ref{n and r:splitting variety}(3).
\end{proof}

Now we are ready to give a proof of Theorem \ref{thm:d=mn}.\\

\noindent{\bf Proof of Theorem \ref{thm:d=mn}.}
Since $\Gamma \subset \P^n$ is in linearly general position and $d=mn$, we know that $\Gamma$ is $m$-normal. Thus, if we regard $\Pi := \P (I(\Gamma)_m )$ as a subspace of $\P (S_m)$, then
\begin{equation*}
\Pi \cap \mbox{Split}_{m} (\P^n ) = [\Phi (\Gamma)_m ].
\end{equation*}
and
\begin{equation*}
    \begin{CD}
        \dim~\Pi  &\quad = \quad& h^0 (\P^n , \mathcal{I}_{\Gamma} (m))-1  \\
        &\quad = \quad& {{m+n} \choose {n}} -mn-1  \\
        &\quad = \quad& \mbox{codim} (\mbox{Split}_{m} (\P^n ) , \P (S_m )) .
    \end{CD}
\end{equation*}
Also $\Phi (\Gamma)_m$ generates $I(\Gamma)_m$ as a $k$-vector space if and only if the set $[\Phi (\Gamma)_m ]$ spans $\Pi$. To show the second statement, we will use Notation and Remarks \ref{n and r:splitting variety}. Let $\Lambda := \langle \Pi , M \rangle$. Then
\begin{equation*}
    \begin{CD}
        \dim ~ \Lambda & \quad = \quad & \dim ~\Pi + \dim~M + 1  \\
        & \quad = \quad & {{m+n} \choose {n}} -mn-1  + p(n,m)+1 \\
        & \quad = \quad & (n+1)^m - mn -1 \\
        &\quad = \quad& \mbox{codim} ( \sigma ( (\P^n )^m ) , \P^{(n+1)^m -1})
    \end{CD}
\end{equation*}
and
\begin{equation*}
    \Lambda \cap \sigma ((\P^n )^{m} ) = \varphi^{-1} ([\Phi (\Gamma)_m ]).
\end{equation*}
Also, $\varphi^{-1} ([\Phi (\Gamma)_m ])$ contains exactly $m ! \times |[\Phi (\Gamma)_m ]|$ elements since every element in $\Phi (\Gamma)_m$ is the product of $m$ distinct linear forms. By Lemma \ref{lem:deg splitting variety}, it holds that
\begin{equation*}
    \begin{CD}
        m ! \times |[\Phi (\Gamma)_m ]| & \quad = \quad & m ! \times \deg ( \mbox{Split}_{m} (\P^n ))\\
        & \quad = \quad & \deg ( \sigma ((\P^n )^{m} )  )
    \end{CD}
\end{equation*}
In consequence, $\dim ~ \Lambda$ and $|\sigma ( (\P^n )^m \cap \Lambda|$ are respectively equal to the codimension and the degree of $\sigma ( (\P^n )^m)$ in $\P^{(n+1)^m -1}$. Since $\sigma ( (\P^n )^m)$ is locally Cohen-Macaulay, it follows by Proposition \ref{prop:lCM flatness} that
\begin{equation*}
\sigma ( (\P^n )^m) \cap \Lambda  = \varphi^{-1} ([\Phi (\Gamma)_m ])
\end{equation*}
spans $\Lambda$. Obviously this can happen only when $[\Phi (\Gamma)_m ]$ spans $\Pi$. This completes the proof that $\Phi (\Gamma)_m$ generates $I(\Gamma)_m$ as a $k$-vector space.    \qed \\

\section{Completely decomposable equations II : Arbitrary case}
\noindent As was mentioned in the beginning of $\S~2$, our aim of this section is to verify that $\Phi (\Gamma)_{\ell}$ generates $I(\Gamma)_{\ell}$ as a $k$-vector space if $\ell n \geq d$. More precisely, our purpose in this section is to prove the following theorem.

\begin{theorem}\label{thm:generation}
Let $\Gamma \subset \P^n$ be a finite set of $d$ points in linearly general position. Then
\begin{equation*}
\Phi (\Gamma)_{\ell} \quad \begin{cases}
\mbox{is empty}  & \mbox{if} \quad \ell < \lceil \frac{d}{n} \rceil, \mbox{and} \\
\mbox{generates $I(\Gamma)_{\ell}$ as a $k$-vector space} & \mbox{if} \quad \ell \geq \lceil \frac{d}{n} \rceil. \end{cases}
\end{equation*}
(Here, $\lceil x \rceil$ denotes the smallest integer $\geq x$.)
\end{theorem}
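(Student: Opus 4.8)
The plan is to establish the emptiness clause by a direct counting argument and the generation clause by induction on the nonnegative integer $\ell n - d$, using Theorem~\ref{thm:d=mn} as the base case. For emptiness: since $\ell$ is an integer, $\ell < \lceil d/n \rceil$ is equivalent to $\ell n < d$. If some $F = H_1 \cdots H_\ell$ lay in $\Phi(\Gamma)_\ell$, then $\Gamma \subseteq V(F) = V(H_1) \cup \cdots \cup V(H_\ell)$; but because $\Gamma$ is in linearly general position, no hyperplane of $\P^n$ meets $\Gamma$ in more than $n$ points, so $d = |\Gamma| \le \sum_{i=1}^{\ell} |V(H_i) \cap \Gamma| \le \ell n < d$, a contradiction. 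Hence $\Phi(\Gamma)_\ell = \emptyset$ whenever $\ell < \lceil d/n \rceil$.

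For generation, fix $\ell$ with $d \le \ell n$ and show that $\Phi(\Gamma)_\ell$ spans $I(\Gamma)_\ell$. If $\ell = 1$ this is trivial, as every nonzero linear form is completely decomposable, so $\Phi(\Gamma)_1 = I(\Gamma)_1 \setminus \{0\}$; assume therefore $\ell \ge 2$. (One may also reduce to the case $d \ge n+1$, in which $\Gamma$ spans $\P^n$: if $d \le n$ then $\Gamma$ spans $\langle\Gamma\rangle \cong \P^{d-1}$, whose ideal is generated by linear forms, and a routine lifting argument from $\langle\Gamma\rangle$ brings one to the simplex case $d = n+1$.) Now induct on $\ell n - d$. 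If $\ell n - d = 0$, that is $d = \ell n$, the assertion is exactly Theorem~\ref{thm:d=mn}. If $d < \ell n$, so $d + 1 \le \ell n$, fix a partition $\Gamma = \Gamma_1 \cup \cdots \cup \Gamma_\ell$ with $|\Gamma_i| \le n$ for all $i$ (possible as $d \le \ell n$); each span $\langle \Gamma_i \rangle$ is then a proper linear subspace of $\P^n$ of dimension $\le n-1$. Since the base field is infinite, choose a point $p \in \P^n$ lying on no hyperplane spanned by $n$ of the points of $\Gamma$ (finitely many, each proper); then $\Gamma^+ := \Gamma \cup \{p\}$ is again a set of $d+1 \le \ell n$ points in linearly general position, and $p \notin \langle\Gamma_i\rangle$ for every $i$, since each $\langle\Gamma_i\rangle$ lies in such a hyperplane. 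By the induction hypothesis applied to $\Gamma^+$ (note $\ell n - (d+1) < \ell n - d$), $\Phi(\Gamma^+)_\ell$ spans $I(\Gamma^+)_\ell$; and since any form vanishing on $\Gamma^+$ vanishes on $\Gamma$, we have $\Phi(\Gamma^+)_\ell \subseteq \Phi(\Gamma)_\ell$, hence $I(\Gamma^+)_\ell \subseteq \langle \Phi(\Gamma)_\ell \rangle$.

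It remains to produce a single $F \in \Phi(\Gamma)_\ell$ with $F(p) \ne 0$. For each $i$, since $\langle \Gamma_i \rangle$ has dimension $\le n-1$ and does not contain $p$, it is contained in some hyperplane $V(H_i)$ with $p \notin V(H_i)$ (the hyperplanes containing $\langle \Gamma_i \rangle$ meet exactly in $\langle \Gamma_i \rangle$, so they cannot all pass through $p$). Then $F := H_1 \cdots H_\ell$ is completely decomposable, vanishes on $\Gamma$, and satisfies $F(p) = \prod_{i=1}^{\ell} H_i(p) \ne 0$, so $F \in \Phi(\Gamma)_\ell \setminus I(\Gamma^+)_\ell$. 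Finally, $I(\Gamma^+)_\ell$ is the kernel of the evaluation functional $G \mapsto G(p)$ on $I(\Gamma)_\ell$, hence has codimension at most $1$ there; since $I(\Gamma^+)_\ell + k F \subseteq \langle \Phi(\Gamma)_\ell \rangle \subseteq I(\Gamma)_\ell$ and $F \notin I(\Gamma^+)_\ell$, this forces $\langle \Phi(\Gamma)_\ell \rangle = I(\Gamma)_\ell$, completing the induction.

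I expect the real content to lie in the base case, Theorem~\ref{thm:d=mn}, already proved through the geometry of the split variety; the inductive step is then essentially formal, resting on the two easy facts that passing from $\Gamma$ to $\Gamma \cup \{p\}$ drops $\dim I(\Gamma)_\ell$ by at most one, and that a completely decomposable $\ell$-form vanishing on $\Gamma$ can be arranged to miss a prescribed general point. The only spot demanding mild care is the choice of $p$, which must simultaneously keep $\Gamma^+$ in linearly general position and stay off the span of every block $\Gamma_i$ of the fixed partition — both nonempty Zariski-open conditions over the infinite base field.
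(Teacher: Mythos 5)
Your argument is correct, and its skeleton --- the counting proof of emptiness plus an induction on $\epsilon = \ell n - d$ whose base case $d = \ell n$ is Theorem~\ref{thm:d=mn} --- is the same as the paper's; but the inductive step is carried out by a genuinely different mechanism. The paper adjoins \emph{two} general points $x,y$ and invokes Lemma~\ref{lem:induction}, the identity $I(\Gamma)_\ell = I(\Gamma\cup\{x\})_\ell + I(\Gamma\cup\{y\})_\ell$, proved by a pure dimension count resting on $\ell$-normality; no particular decomposable form ever needs to be exhibited, since $\Phi(\Gamma\cup\{x\})_\ell \cup \Phi(\Gamma\cup\{y\})_\ell \subseteq \Phi(\Gamma)_\ell$ already suffices. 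You instead adjoin \emph{one} general point $p$, note that $I(\Gamma\cup\{p\})_\ell$ is the kernel of evaluation at $p$ and hence has codimension at most one in $I(\Gamma)_\ell$, and close the remaining gap by explicitly building $F = H_1\cdots H_\ell \in \Phi(\Gamma)_\ell$ with $F(p)\neq 0$ from a partition of $\Gamma$ into blocks of size at most $n$. Your step is more elementary in that it needs no normality statement at all (only the linear algebra of a single evaluation functional), at the price of the explicit construction of $F$ and a more delicate choice of $p$; the paper's two-point trick buys uniformity, since the same Lemma~\ref{lem:induction} is reused verbatim in the degree-$(m+1)$ arguments of \S4 and \S5. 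One small imprecision in your write-up: the description of the admissible $p$ (``off every hyperplane spanned by $n$ points of $\Gamma$'') tacitly assumes $d\ge n$, and your parenthetical reduction of the case $d\le n$ to the simplex is left as a ``routine lifting''; it is cleaner to require directly that $p$ avoid the span of every subset of $\Gamma$ of size at most $n$ (finitely many proper linear subspaces, so such $p$ exists over the infinite field $k$), which simultaneously keeps $\Gamma\cup\{p\}$ in linearly general position and keeps $p$ off each $\langle\Gamma_i\rangle$, uniformly for all $d\le \ell n$.
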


The following elementary lemma allows one to deduce the proof of Theorem \ref{thm:generation} from the special case where $d=mn$ and $\ell = m$ which was dealt with in Theorem \ref{thm:d=mn}.

\begin{lemma}\label{lem:induction}
Let $\Gamma_0  \subset \P^n$ be a finite set of $d+2$ points in linearly general position. For two distinct points $x$ and $y$ in $\Gamma_0$, let
\begin{equation*}
\Gamma_1 := \Gamma_0 - \{x  \} , \quad \Gamma_2 := \Gamma_0 - \{y  \} \quad \mbox{and} \quad \Gamma  := \Gamma_0 - \{x ,y \}.
\end{equation*}
If $\ell \geq  \lceil \frac{d+1}{n} \rceil$, then $I(\Gamma)_\ell = I(\Gamma_1)_\ell + I(\Gamma_2)_\ell$.
\end{lemma}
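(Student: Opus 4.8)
The inclusion $I(\Gamma_1)_\ell + I(\Gamma_2)_\ell \subseteq I(\Gamma)_\ell$ holds trivially because $\Gamma \subseteq \Gamma_1$ and $\Gamma \subseteq \Gamma_2$, so the whole point is to prove the reverse inclusion, and the plan is to reduce it to a single surjectivity statement about evaluation of degree $\ell$ forms. Note that, as sets, $\Gamma_1 = \Gamma \cup \{y\}$, $\Gamma_2 = \Gamma \cup \{x\}$ and $\Gamma_0 = \Gamma \cup \{x,y\}$. After fixing homogeneous coordinate vectors $\tilde{x}$ and $\tilde{y}$ representing $x$ and $y$, I would introduce the linear evaluation map
\[
\rho : I(\Gamma)_\ell \longrightarrow k^2, \qquad \rho(G) = \bigl(G(\tilde{x}), G(\tilde{y})\bigr),
\]
and record the obvious facts $\ker \rho = I(\Gamma_0)_\ell$, $I(\Gamma_1)_\ell = \rho^{-1}(k \times \{0\})$ and $I(\Gamma_2)_\ell = \rho^{-1}(\{0\} \times k)$; in particular $\ker\rho \subseteq I(\Gamma_1)_\ell \cap I(\Gamma_2)_\ell$.

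The crux is to show that $\rho$ is surjective, and this is where the hypothesis enters. Since $|\Gamma_0| = d+2$ and $\ell \geq \lceil \tfrac{d+1}{n}\rceil$, we have $|\Gamma_0| \leq \ell n + 1$, and as $\Gamma_0$ is in linearly general position the classical Castelnuovo lemma (the same fact invoked, for instance, in the proof of Theorem \ref{thm:d=mn} as ``$\Gamma$ is $m$-normal'') shows that $\Gamma_0$ imposes independent conditions on hypersurfaces of degree $\ell$, i.e.\ $\Gamma_0$ is $\ell$-normal. Equivalently, the evaluation map $S_\ell = H^0(\P^n, \mathcal{O}_{\P^n}(\ell)) \to k^{\Gamma_0}$ is surjective. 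Restricting this surjection to the subspace $I(\Gamma)_\ell$ of forms vanishing on $\Gamma$, its image lands in the coordinate plane indexed by $\{x,y\}$ and, by surjectivity onto all of $k^{\Gamma_0}$, still surjects onto that plane — which is precisely the surjectivity of $\rho$.

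Granting this, the proof concludes by elementary linear algebra: choose $G_1 \in I(\Gamma)_\ell$ with $\rho(G_1) = (1,0)$ and $G_2 \in I(\Gamma)_\ell$ with $\rho(G_2) = (0,1)$, so that $G_1 \in I(\Gamma_1)_\ell$ and $G_2 \in I(\Gamma_2)_\ell$. For an arbitrary $F \in I(\Gamma)_\ell$ with $\rho(F) = (a,b)$, one has $F - aG_1 - bG_2 \in \ker\rho = I(\Gamma_0)_\ell \subseteq I(\Gamma_1)_\ell$, hence $F - bG_2 \in I(\Gamma_1)_\ell$ while $bG_2 \in I(\Gamma_2)_\ell$, giving $F \in I(\Gamma_1)_\ell + I(\Gamma_2)_\ell$. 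I expect the only genuine obstacle to be the surjectivity of $\rho$, i.e.\ pinning down the precise numerical threshold on $\ell$ that forces $\ell$-normality of the $(d+2)$-point set $\Gamma_0$; the bound $\ell \geq \lceil \tfrac{d+1}{n}\rceil$ is exactly what is needed. (Alternatively, one may package the same computation via the Mayer--Vietoris sequence $0 \to \mathcal{I}_{\Gamma_0} \to \mathcal{I}_{\Gamma_1} \oplus \mathcal{I}_{\Gamma_2} \to \mathcal{I}_{\Gamma} \to 0$: the cokernel of $H^0(\mathcal{I}_{\Gamma_1}(\ell)) \oplus H^0(\mathcal{I}_{\Gamma_2}(\ell)) \to H^0(\mathcal{I}_{\Gamma}(\ell))$ embeds into $H^1(\mathcal{I}_{\Gamma_0}(\ell))$, which vanishes for the same reason.)
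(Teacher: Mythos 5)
Your proof is correct, and it is a genuine (if mild) variation on the paper's argument rather than a copy of it. Both proofs turn on exactly the same key input with exactly the same numerics: since $\ell \geq \lceil \frac{d+1}{n}\rceil$, one has $|\Gamma_0| = d+2 \leq \ell n+1$, so the classical fact about points in linearly general position (cf. \cite{GM}) gives $\ell$-normality, i.e.\ surjectivity of $S_\ell \to k^{\Gamma_0}$. Where you differ is in how that fact is converted into the equality of subspaces. The paper never constructs an evaluation map on $I(\Gamma)_\ell$; it simply records $\dim_k I(\Gamma_1)_\ell = \dim_k I(\Gamma_2)_\ell = \binom{n+\ell}{n}-(d+1)$, $\dim_k I(\Gamma)_\ell = \binom{n+\ell}{n}-d$, $\dim_k I(\Gamma_0)_\ell = \binom{n+\ell}{n}-(d+2)$, notes $I(\Gamma_1)_\ell \cap I(\Gamma_2)_\ell \subseteq I(\Gamma_0)_\ell$, and concludes by the dimension formula that $\dim_k\bigl(I(\Gamma_1)_\ell + I(\Gamma_2)_\ell\bigr) = \dim_k I(\Gamma)_\ell$, whence equality by the trivial inclusion. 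You instead prove surjectivity of the two-point evaluation map $\rho : I(\Gamma)_\ell \to k^2$ and then decompose an arbitrary $F \in I(\Gamma)_\ell$ explicitly; your identifications $\ker\rho = I(\Gamma_0)_\ell$, $I(\Gamma_1)_\ell = \rho^{-1}(k\times\{0\})$, $I(\Gamma_2)_\ell = \rho^{-1}(\{0\}\times k)$ are all right, and the surjectivity argument (prescribe values $0$ on $\Gamma$ and arbitrary values at $x,y$) is sound. What your version buys is that it only invokes normality of the single largest set $\Gamma_0$ (normality of the subsets is never needed) and it produces an explicit decomposition $F = (F - bG_2) + bG_2$; what the paper's version buys is shorter bookkeeping, since the inclusion--exclusion count replaces the construction of $G_1, G_2$. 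Your Mayer--Vietoris remark is also a correct repackaging, with $H^1(\mathcal{I}_{\Gamma_0}(\ell)) = 0$ playing the role of surjectivity of $\rho$.
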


\begin{proof}
First note that $d+1 \leq \ell n$ and hence $|\Gamma_0 | = d+2 \leq \ell n+1$. So, all of $\Gamma_0$, $\Gamma_1$, $\Gamma_2$ and $\Gamma$ satisfy the $\ell$-normality. In particular, it holds that
\begin{equation*}
\dim_k ~ I(\Gamma_1)_\ell = \dim_k ~ I(\Gamma_2)_\ell = \binom{n+\ell}{n}-(d+1),
\end{equation*}
\begin{equation*}
\dim_k ~ I(\Gamma )_\ell = \binom{n+\ell}{n}-d
\end{equation*}
and
\begin{equation*}
\dim_k ~ I(\Gamma_0 )_\ell = \binom{n+\ell}{n}-(d+2).
\end{equation*}
Also $I(\Gamma_1)_\ell \cap I(\Gamma_2)_\ell \subseteq I(\Gamma_0 )_\ell$ because $\Gamma_1 \cup \Gamma_2 = \Gamma_0$. Therefore we get
\begin{equation*}
\begin{CD}
\dim_k ~ I(\Gamma_1)_\ell + I(\Gamma_2)_\ell &\quad = \quad & \dim_k ~ I(\Gamma_1)_\ell + \dim_k ~ I(\Gamma_2)_\ell - \dim_k ~ (I(\Gamma_1)_\ell \cap I(\Gamma_2)_\ell)\\
&\quad=\quad& \binom{n+\ell}{n}-d \\
 &\quad=\quad& \dim_k ~ I(\Gamma)_\ell .
\end{CD}
\end{equation*}
This implies the desired equality
\begin{equation*}
I(\Gamma)_\ell=I(\Gamma_1)_\ell+I(\Gamma_2)_\ell
\end{equation*}
since $I(\Gamma_1)_\ell + I(\Gamma_2)_\ell$ is a subspace of $ I(\Gamma)_\ell$.
\end{proof}

To simplify the proof of Theorem \ref{thm:generation}, we introduce a definition.

\begin{definition}
We will say that \textit{property $P(n,d,\ell)$} holds if for any finite set $\Gamma \subset \P^n$ of $d$ points in linearly general position, $I(\Gamma)_{\ell}$ is generated by $\Phi (\Gamma)_{\ell}$ as a $k$-vector space.
\end{definition}

For example, Theorem \ref{thm:d=mn} says that property $P(n,mn,m)$ always holds.

Now, we give a proof of Theorem \ref{thm:generation}.\\

\noindent {\bf Proof of Theorem \ref{thm:generation}.} Put $m = \lceil \frac{d}{n} \rceil$. Thus $(m-1)n+1 \leq d \leq mn$.

First, suppose that $\Phi (\Gamma)_{\ell}$ is nonempty and let $H_1 \cdots H_{\ell}$ be an element of $\Phi (\Gamma)_{\ell}$. Thus
\begin{equation*}
\Gamma = (\Gamma \cap V(H_1 ) ) \cup \cdots \cup (\Gamma \cap V(H_{\ell} ) ).
\end{equation*}
Also
\begin{equation*}
|\Gamma \cap V(H_i )| \leq n \quad \mbox{for all} \quad 1 \leq i \leq \ell
\end{equation*}
since $\Gamma$ is in linearly general position. In particular, it holds that
\begin{equation*}
d=|\Gamma| \leq \sum_{i=1} ^{\ell} |\Gamma \cap V(H_i ) | \leq \ell n,
\end{equation*}
or equivalently, $\ell \geq m$. This shows that $\Phi (\Gamma)_{\ell}$ is empty if $\ell < m$ and hence it proves the first part of our theorem.

Now, suppose that $\ell \geq m$. Write $d = \ell n - \epsilon$ for some $\epsilon \geq 0$. To show that property $P(n,d,\ell)$ holds, we will use the induction on $\epsilon \geq 0$.

If $\epsilon =0$ and so $d=\ell n$, then property $P(n,d,\ell)$ holds by Theorem \ref{thm:d=mn}.

Next, we assume that $\epsilon$ is positive. Choose two distinct points $x,y \in \P^n - \Gamma$ such that the finite set
\begin{equation*}
\Gamma_0 := \Gamma \cup \{x ,y \} ~\subset ~\P^n
\end{equation*}
is in linearly general position. For
\begin{equation*}
\Gamma_1 := \Gamma \cup \{x \} \quad \mbox{and} \quad \Gamma_2 := \Gamma \cup \{y \},
\end{equation*}
we have
\begin{equation*}
|\Gamma_1 | = |\Gamma_2 | = \ell n - (\epsilon -1).
\end{equation*}
Thus, by induction hypothesis, it follows that $I(\Gamma_1 )_m$ and $I(\Gamma_2 )_m$ are respectively generated by $\Phi (\Gamma_1 )_m$ and $\Phi (\Gamma_2 )_m$ as $k$-vector spaces. Since
\begin{equation*}
I(\Gamma)_{\ell} = I(\Gamma_1 )_{\ell} + I(\Gamma_2 )_{\ell}
\end{equation*}
by Lemma \ref{lem:induction} and since $\Phi (\Gamma )_{\ell}$ contains the union $\Phi (\Gamma_1 )_{\ell} \cup \Phi (\Gamma_2 )_{\ell}$, we can conclude that $I(\Gamma )_{\ell}$ is generated by $\Phi (\Gamma )_{\ell}$ as a $k$-vector space. This completes the proof that property $P(n,d,\ell)$ holds. \qed \\

\section{Completely decomposable generators of $I(\Gamma)$ when $d=mn$}
\noindent Let $\Gamma \subset \P^n$ be a finite set of $d$ points in linearly general position. Through this section and the next section, we will prove the equality
\begin{equation*}
I(\Gamma) = \langle I(\Gamma)_{\leq m-1} , \Phi (\Gamma)_m \rangle
\end{equation*}
mentioned in Theorem \ref{thm:main}.

In this section we concentrate on verifying the following theorem about the case where $d=mn$.

\begin{theorem}\label{thm:d=mn generation of the ideal}
Let $\Gamma \subset \P^n$ be as above such that $d=mn$ for some $m \geq 2$. Then $I(\Gamma)_{m+1}$ is generated by the set
\begin{equation*}
S_1 \Phi (\Gamma)_m := \{ hF~|~ h\in S_1 - \{0\} ,~F \in \Phi (\Gamma)_m \}
\end{equation*}
as a $k$-vector space.
\end{theorem}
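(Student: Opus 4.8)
The plan is to show $I(\Gamma)_{m+1} = S_1 \Phi(\Gamma)_m$ (as $k$-vector spaces) by a dimension count, having already established from Theorem \ref{thm:d=mn} that $I(\Gamma)_m$ is spanned by $\Phi(\Gamma)_m$. Since $\Gamma$ is $m$-normal (indeed $(m+1)$-regular), we know $\dim_k I(\Gamma)_{m+1} = \binom{n+m+1}{n} - mn$. On the other hand $S_1 \Phi(\Gamma)_m$ is a subspace of $S_1 I(\Gamma)_m \subseteq I(\Gamma)_{m+1}$, and because $\Phi(\Gamma)_m$ spans $I(\Gamma)_m$ we have $S_1 \Phi(\Gamma)_m = S_1 \cdot I(\Gamma)_m$. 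So the problem reduces to proving that multiplication $S_1 \otimes I(\Gamma)_m \to I(\Gamma)_{m+1}$ is surjective, i.e. that $I(\Gamma)$ has no minimal generators in degree $m+1$.

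First I would record the graded Betti-number consequence of $(m+1)$-regularity: the coordinate ring $S/I(\Gamma)$ is $(m+1)$-regular, so $I(\Gamma)$ is generated in degrees $\leq m+1$, and the number of degree-$(m+1)$ minimal generators of $I(\Gamma)$ equals $\dim_k I(\Gamma)_{m+1} - \dim_k (S_1 \cdot I(\Gamma)_m)$. Thus it suffices to show $\dim_k(S_1 \cdot I(\Gamma)_m) = \binom{n+m+1}{n} - mn$. Equivalently, writing $R = S/I(\Gamma)$ for the coordinate ring of the $d=mn$ points, I would show that the multiplication map $R_1 \otimes R_m \to R_{m+1}$ is injective — no, rather that the cokernel of $S_1 \otimes I(\Gamma)_m \to I(\Gamma)_{m+1}$ vanishes, which (since $R_1 \otimes R_m \to R_{m+1}$ is always surjective by $m$-normality and the fact that $\Gamma$ spans) is the statement that $\dim_k R_{m+1} = \dim_k (R_1 \cdot R_m)$ with the multiplication surjective — automatic — so the real content is a rank computation for $S_1 \otimes I(\Gamma)_m \to I(\Gamma)_{m+1}$. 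I would phrase the needed fact as: $\Gamma$ is $m$-regular? No — it is not, in general. So the honest route is to prove directly that $S_1 \Phi(\Gamma)_m$ has dimension $\binom{n+m+1}{n}-mn$.

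To do that, my main tool would be Theorem \ref{thm:d=mn} applied not to $\Gamma$ itself but to the residual schemes: for each point $p \in \Gamma$, the set $\Gamma' = \Gamma \setminus \{p\}$ has $mn-1 \leq mn$ points in linearly general position, so by Theorem \ref{thm:generation} (with $\ell = m$) $I(\Gamma')_m$ is spanned by $\Phi(\Gamma')_m$. One then has the exact sequence $0 \to I(\Gamma)_{m+1} \to I(\Gamma')_{m+1} \to k \to 0$ coming from evaluation at $p$ (using $(m+1)$-normality of both), and similarly in degree $m$; chasing how $S_1 \Phi(\Gamma)_m$ sits inside $S_1 \Phi(\Gamma')_m$, together with an inductive/linear-algebra argument choosing a point $p$ and a linear form $h$ vanishing at $p$ but generic otherwise, should let me lift generators from degree $m$ to degree $m+1$. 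Concretely: given $G \in I(\Gamma)_{m+1}$, I want to write $G = \sum h_i F_i$ with $F_i \in \Phi(\Gamma)_m$; pick a hyperplane $H$ through $n$ of the points meeting $\Gamma$ in exactly $n$ points $\Gamma_H$, so that $\Gamma \setminus \Gamma_H$ is $(m-1)n$ points in linearly general position, and argue that $G$ restricted appropriately is divisible by the linear form of $H$ modulo lower pieces, using that $I(\Gamma \setminus \Gamma_H)_m \supseteq \Phi$-forms by Theorem \ref{thm:d=mn} again with parameter $m-1$ — wait, that gives forms of degree $m-1$, so $H \cdot \Phi(\Gamma\setminus\Gamma_H)_{m-1} \subseteq \Phi(\Gamma)_m$, and one bootstraps one more degree. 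I would set this up as a clean induction on $m$.

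The main obstacle I anticipate is precisely this lifting step: showing that an arbitrary $G \in I(\Gamma)_{m+1}$ decomposes as $\sum h_i F_i$ with each $F_i$ completely decomposable in $I(\Gamma)_m$, rather than merely $G \in S_1 \cdot I(\Gamma)_m$ with non-decomposable factors. The inclusion $S_1 \cdot I(\Gamma)_m = S_1 \cdot \langle \Phi(\Gamma)_m\rangle_k = S_1 \Phi(\Gamma)_m$ is formal once Theorem \ref{thm:d=mn} is in hand, so in fact the whole theorem collapses to proving $S_1 \cdot I(\Gamma)_m = I(\Gamma)_{m+1}$, i.e. that $\Gamma$ has its $(m+1)$-st graded piece generated by lower-degree elements. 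That follows if one knows $\Gamma$ is $m$-regular, but a set of $mn$ general points need not be; the correct substitute is the classical fact (used already implicitly via Castelnuovo–Mumford $(m+1)$-regularity) that for points in linearly general position the natural map $I(\Gamma)_m \otimes S_1 \to I(\Gamma)_{m+1}$ is surjective whenever $d \leq mn$ — this is where I would either cite the regularity bound precisely or give the short direct argument via general hyperplane sections and induction on $m$, reducing $d=mn$ in $\P^n$ to $d = (m-1)n$ in a hyperplane plus $n$ points on it. Getting the bookkeeping of that reduction exactly right, so that the decomposable structure is preserved, is the delicate point.
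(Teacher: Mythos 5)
Your reduction has a correct formal part and a missing core. The observation that the span of $S_1 \Phi(\Gamma)_m$ equals the span of $S_1 I(\Gamma)_m$ (because $\Phi(\Gamma)_m$ spans $I(\Gamma)_m$ by Theorem \ref{thm:d=mn}, and multiplication is bilinear) is fine. But after this step your proposal is exactly the statement that $I(\Gamma)$ has no minimal generators in degree $m+1$, i.e.\ the surjectivity of $S_1 \otimes I(\Gamma)_m \to I(\Gamma)_{m+1}$ for $d=mn$ points in linearly general position --- and this is not a fact you can invoke: it is Treger's theorem itself (for this $d$), the very result the paper is reproving, and Section 4 of the paper exists precisely to supply its proof. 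As you note yourself, $(m+1)$-regularity only gives generation in degrees $\leq m+1$, and $mn$ points in linearly general position need not be $m$-regular (points on a rational normal curve fail $(m-1)$-normality), so no regularity or dimension count closes the gap. Your proposed ``short direct argument via general hyperplane sections and induction on $m$'' is not carried out, and as sketched it does not reduce the problem: removing the $n$ points on a hyperplane $H$ leaves $(m-1)n$ points that still span $\P^n$, not a configuration in $\P^{n-1}$, so the induction does not lower the dimension, and the lifting step ($G$ divisible by the equation of $H$ modulo lower terms) is exactly where all the difficulty sits; you flag it as ``the delicate point'' but do not resolve it.

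For comparison, the paper's route avoids this entirely by exploiting decomposability one more time: by Theorem \ref{thm:generation}, $I(\Gamma)_{m+1}$ is spanned by $\Phi(\Gamma)_{m+1}$, so it suffices to put each completely decomposable $F = h_1\cdots h_{m+1}$ into the span of $S_1\Phi(\Gamma)_m$. This is done by induction on $mn - \mu_\Gamma(F)$: when $\mu_\Gamma(F)=mn$ one factor can be dropped and $F \in S_1\Phi(\Gamma)_m$ outright; when $\mu_\Gamma(F)=mn-1$ the explicit constructions of Lemmas \ref{lem:main_1} and \ref{lem:mu=d-1 case} apply; and otherwise a factor $h_m$ is split as $h_{m,1}+h_{m,2}$ via Lemma \ref{lem:induction} so that $F = G_1+G_2$ with $G_j \in \Phi(\Gamma)_{m+1}$ and $\mu_\Gamma(G_j) > \mu_\Gamma(F)$. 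Some argument of this kind (or an equivalent substitute) is indispensable; without it your proposal restates the theorem rather than proving it.
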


To this aim, we need a few notation and definition.

\begin{definition and remark}\label{d and r:intersection with Gamma}
Let $\Gamma \subset \P^n$ be as a finite set of $d$ points.
\smallskip

\renewcommand{\descriptionlabel}[1]%
{\hspace{\labelsep}\textrm{#1}}
\begin{description}
    \setlength{\labelwidth}{13mm}
    \setlength{\labelsep}{1.5mm}
    \setlength{\itemindent}{0mm}

\item[$(1)$] For a homogeneous polynomial $F \in S$, we define $V_\Gamma(F)$ and $\lambda_{\Gamma} (F)$ respectively as
\begin{equation*}
V_\Gamma (F) := \Gamma \cap V(F) \quad \mbox{and} \quad \lambda_\Gamma (F) = |V_\Gamma (F)|.
\end{equation*}
Obviously, $\lambda_\Gamma (F) \leq d$ and equality is attained if and only if $F \in I(\Gamma)$.

\item[$(2)$] Let $F_1,\cdots, F_{\ell}$ be nonzero homogeneous polynomials in $S$. Then it holds that
\begin{equation*}
V_\Gamma (F_1\cdots F_{\ell} ) = \bigcup_{i=1}^{\ell} V_\Gamma (F_i)
\end{equation*}
and hence
\begin{equation}\label{eq:4.1}
\lambda_\Gamma (F_1 \cdots F_{\ell})  \leq \sum_{i=1}^{\ell} |V_\Gamma(F_i)| = \sum_{i=1}^{\ell} \lambda_{\Gamma} (F_i).
\end{equation}

\item[$(3)$] Let $F$ be an element of $\Phi (\Gamma)_{\ell+1}$. Thus $F= h_1 \cdots h_{\ell+1}$ for some nonzero linear forms $h_1 ,\ldots , h_{\ell}, h_{\ell+1} \in S_1$. Then we define $\mu_{\Gamma} (F)$ and $\nu_{\Gamma , F} (h_i)$, $1 \leq i \leq \ell+1$, respectively as
\begin{equation*}
\mu_{\Gamma} (F) = \mbox{max} \left \{ \lambda_{\Gamma} \left( \frac{F}{h_i} \right) ~|~ 1 \leq i \leq \ell+1 \right \}
\end{equation*}
and
\begin{equation*}
\nu_{\Gamma ,F} (h_i) =  \Gamma - V_{\Gamma}  \left( \frac{F}{h_i} \right) \subseteq V_{\Gamma} (h_i).
\end{equation*}
Note that $\mu_{\Gamma} (F) \leq d$ and equality is attained if and only if $\frac{F}{h_i} \in \Phi (\Gamma)_{\ell}$ for some $i$. Also $\nu_{\Gamma , F} (h_1),\cdots, \nu_{\Gamma , F} (h_{\ell+1})$ are pairwise disjoint and so it holds that
\begin{equation}\label{eq:basic inequality}
\sum_{i=1}^{\ell+1} |\nu_{\Gamma , F} (h_i)|=|\bigcup_{i=1}^{\ell+1} \nu_{\Gamma , F} (h_i)| \leq |\bigcup_{i=1}^{\ell+1} V_{\Gamma} (h_i)|  = | V_{\Gamma} (F)| = |\Gamma| =d .
\end{equation}
\end{description}
\end{definition and remark}
\smallskip

\begin{lemma}\label{lem:main_1}
Let $\Gamma \subset \P^n$ be a finite set of $2n$ points in linearly
general position and let $F \in \Phi (\Gamma)_3$ be such that
$\mu_{\Gamma} (F) = 2n-1$. Then $F$ is contained in the subspace of
$I(\Gamma)_3$ generated by the subset
\begin{equation*}
S_1 \Phi (\Gamma)_2 := \{ hF~|~ h\in S_1 - \{0\} ,~F \in \Phi
(\Gamma)_2 \}
\end{equation*}
as a $k$-vector space.
\end{lemma}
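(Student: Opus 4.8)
The plan is to set up $F = h_1 h_2 h_3 \in \Phi(\Gamma)_3$ with $\mu_\Gamma(F) = 2n-1$, say $\lambda_\Gamma(h_2 h_3) = 2n-1$, so that exactly one point $p_1 \in \Gamma$ fails to lie on $V(h_2) \cup V(h_3)$; necessarily $p_1 \in V(h_1)$, and $\nu_{\Gamma,F}(h_1) = \{p_1\}$. The other $2n-1$ points of $\Gamma$ split between the hyperplanes $V(h_2)$ and $V(h_3)$; by linear general position each of these hyperplanes contains at most $n$ points of $\Gamma$, so the split is $\{n, n-1\}$ or $\{n-1, n\}$ — say $V(h_2)$ carries a set $A$ with $|A| = n$ and $V(h_3)$ carries $B$ with $|B| = n-1$, where $A \cup B = \Gamma - \{p_1\}$ and $A, B$ are disjoint. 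The idea is then to rewrite $F$ modulo $S_1 \Phi(\Gamma)_2$ by trading the factor $h_1 h_3$ (which vanishes only on $\{p_1\} \cup B$, a set of $n$ points) for a genuine element of $\Phi(\Gamma)_2$.

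The key construction is as follows. Since $\Gamma$ is in linearly general position and $|A| = n$, the hyperplane $\langle A \rangle$ is the unique hyperplane through $A$, and $h_2$ is (up to scalar) its equation. The $n$ points $\{p_1\} \cup B$ span a hyperplane $H'$; let $g \in S_1$ be its equation, so $V_\Gamma(g) \supseteq \{p_1\} \cup B$. I claim $V_\Gamma(g) = \{p_1\} \cup B$ exactly: if $g$ vanished at some point of $A$ too, then $H'$ would contain $n+1$ points of $\Gamma$ in linearly general position, forcing $H' = \P^n$, a contradiction. Now consider the quadric $Q := g h_2$. Since $V_\Gamma(Q) = V_\Gamma(g) \cup V_\Gamma(h_2) = (\{p_1\} \cup B) \cup A = \Gamma$, we have $Q \in \Phi(\Gamma)_2$, and hence $h_3 Q = h_3 g h_2 \in S_1 \Phi(\Gamma)_2$. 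Comparing, $F = h_1 h_2 h_3$ and $h_3 g h_2 = h_2 h_3 g$ differ by replacing $h_1$ with $g$: both $h_1$ and $g$ vanish on $\{p_1\}$, but $g$ also vanishes on $B$ while $h_1$ need not. So $F$ itself is not obviously in the span yet; the real work is to produce a spanning identity. The cleanest route is to use Theorem~\ref{thm:d=mn} (the case $d = 2n$, $m = 2$): $I(\Gamma)_2$ is spanned over $k$ by $\Phi(\Gamma)_2$. Therefore $h_2 h_3 \cdot I(\Gamma)_1 = 0$ is vacuous, but more usefully, I consider instead the cubic $F$ together with the linear system it belongs to: since $\lambda_\Gamma(h_2 h_3) = 2n-1$, the product $h_2 h_3$ vanishes on all of $\Gamma$ except $p_1$, i.e. $h_2 h_3 \in I(\Gamma - \{p_1\})_2$. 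By linear general position $|\Gamma - \{p_1\}| = 2n-1 \le 2n$, and Theorem~\ref{thm:generation} (property $P(n, 2n-1, 2)$) gives that $I(\Gamma - \{p_1\})_2$ is spanned by $\Phi(\Gamma - \{p_1\})_2$. Write $h_2 h_3 = \sum_j c_j Q_j$ with each $Q_j = u_j v_j \in \Phi(\Gamma - \{p_1\})_2$ a product of linear forms vanishing on complementary $n$-subsets (or subsets of size $\le n$) of $\Gamma - \{p_1\}$. Then $F = h_1 h_2 h_3 = \sum_j c_j h_1 u_j v_j$, and for each $j$, among $\{h_1, u_j, v_j\}$ the product of any two omitting the right factor must be checked to vanish on all of $\Gamma$: since $h_1(p_1) = 0$ and $u_j v_j$ vanishes on $\Gamma - \{p_1\}$, the cubic $h_1 u_j v_j$ vanishes on all of $\Gamma$, and I must arrange that $h_1 u_j$ or $h_1 v_j$ vanishes on a set of size $\geq \dots$; in fact $h_1 u_j \in \Phi(\Gamma)_2$ precisely when $V_\Gamma(h_1) \cup V_\Gamma(u_j) = \Gamma$, which holds iff $u_j$ vanishes at every point of $\Gamma - \{p_1\}$ not on $V(h_1)$.

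The main obstacle — and where the argument needs care — is precisely this last point: decomposing $h_2 h_3$ into completely decomposable quadrics on $\Gamma - \{p_1\}$ does not automatically make $h_1 \cdot (\text{those quadrics})$ land in $S_1 \Phi(\Gamma)_2$, because $h_1$ may fail to "absorb" the remaining point. I expect to fix this by choosing the decomposition adapted to $h_1$: partition $\Gamma - \{p_1\}$ into pairs of complementary $n$-subsets $\{C, D\}$ with the extra requirement that $V(h_1) \cap (\Gamma - \{p_1\})$ is contained in one part, say $D$; then $h_1$ together with the equation $w_C$ of $\langle C \rangle$ gives $h_1 w_C \in \Phi(\Gamma)_2$ (it vanishes on $C \cup V_\Gamma(h_1) \ni p_1$, and $|C| + $ the extra point covers all of $\Gamma$ since $C$ and $\Gamma - C$ partition $\Gamma - \{p_1\}$ and $p_1 \in V(h_1)$). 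One then needs that such adapted decompositions still span the subspace of $I(\Gamma - \{p_1\})_2$ containing $h_2 h_3$ — this should follow by running the dimension/nondegeneracy argument of Theorem~\ref{thm:d=mn} relative to the flag determined by $V(h_1)$, or alternatively by a short direct linear-algebra computation using that $\dim_k I(\Gamma - \{p_1\})_2 = \binom{n+2}{2} - (2n-1)$ is small and the adapted $\Phi$-elements are numerous enough. Once $h_2 h_3 = \sum_j c_j w_{C_j} w_{D_j}$ with $V(h_1) \cap (\Gamma - \{p_1\}) \subseteq D_j$ for all $j$, multiplying by $h_1$ expresses $F$ as a $k$-combination of the elements $h_1 w_{C_j} \in \Phi(\Gamma)_2$ times the linear forms $w_{D_j} \in S_1$, which exhibits $F \in S_1 \Phi(\Gamma)_2$ and completes the proof.
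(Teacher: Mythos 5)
Your setup and the preliminary observations (the point $p_1$, the forced trace sizes, the quadric $gh_2\in\Phi(\Gamma)_2$) are fine, but the step you yourself flag as needing care is not merely unproven --- it cannot work under the hypotheses of the lemma. For a term $h_1\,w_{C_j}w_{D_j}$ to contribute, you need $h_1 w_{C_j}\in\Phi(\Gamma)_2$, i.e.\ $V_\Gamma(w_{C_j})\supseteq \Gamma-V_\Gamma(h_1)$, a set of $2n-\lambda_\Gamma(h_1)\geq n$ points. If $\lambda_\Gamma(h_1)<n$ (which is perfectly possible: $h_1$ only has to vanish at $p_1$), this set has more than $n$ points and, by linearly general position, no linear form vanishes on it, so there are \emph{no} adapted quadrics at all; also note that your stated condition ``$V(h_1)\cap(\Gamma-\{p_1\})\subseteq D_j$'' goes the wrong way --- what $h_1w_{C_j}\in\Phi(\Gamma)_2$ actually requires is $D_j\subseteq V_\Gamma(h_1)$. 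If $\lambda_\Gamma(h_1)=n$, then $w_{C_j}$ is forced (up to scalar) to be the single linear form $w$ cutting the hyperplane spanned by $\Gamma-V_\Gamma(h_1)$, and the adapted elements span only the $2$-dimensional space $w\cdot I(D')_1$ with $D'=V_\Gamma(h_1)-\{p_1\}$. By unique factorization, $h_2h_3$ lies in that space only if $w$ is proportional to $h_2$ or $h_3$, which would give $\lambda_\Gamma(h_1h_2)=2n$ or $\lambda_\Gamma(h_1h_3)=2n$, i.e.\ $\mu_\Gamma(F)=2n$, contradicting the hypothesis $\mu_\Gamma(F)=2n-1$. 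So no dimension count or nondegeneracy argument ``relative to the flag of $V(h_1)$'' can produce the decomposition you want: the requisite identity simply does not exist.

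The way the paper escapes this trap is worth noting, because it differs from your scheme in exactly the place where yours breaks: instead of decomposing $h_2h_3$ inside $I(\Gamma-\{p_1\})_2$ and asking the leftover linear factor to pair with each quadric, the paper corrects the problematic quadric so that it lies in $I(\Gamma)_2$ itself. Concretely (in its labeling, with the missed point on the third factor), it chooses a linear form $h$ whose trace on $\Gamma$ is complementary to that of $h_2$ (so $hh_2\in\Phi(\Gamma)_2$, your $gh_2$), then builds by interpolation a linear form $h'$ with $G:=h_1h_3-hh'\in I(\Gamma)_2$, and writes $F=h_2G+hh_2h'$. Since $G$ vanishes on \emph{all} of $\Gamma$, Theorem \ref{thm:d=mn} puts $h_2G$ in the span of $S_1\Phi(\Gamma)_2$ with no pairing condition needed, and $hh_2h'$ is visibly in $S_1\Phi(\Gamma)_2$; a second case, where the traces of the two hyperplanes realizing $\mu_\Gamma(F)$ do not overlap suitably, is reduced to the first by splitting $h_2=h_{2,1}+h_{2,2}$ via Lemma \ref{lem:induction}. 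To salvage your approach you would need an analogous correction term (some substitute for $hh'$), not a better choice of decomposition of $h_2h_3$.
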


\begin{proof}
Let $\Gamma = \{ P_1 , \ldots , P_n , Q_1 , \ldots , Q_n \}$ and
write $F=h_1 h_2 h_3$ where $h_1$, $h_2$ and $h_3$ are nonzero linear forms.
Without loss of generality, we may assume the following conditions:\\

\begin{enumerate}
\item[$(i)$] $\Gamma - V_{\Gamma} (h_1 h_2 ) = \{ Q_n \}$ and hence $\lambda_{\Gamma} (h_1 h_2) = 2n-1$
\item[$(ii)$] $V_{\Gamma} (h_1 )=\{ P_1 , \ldots , P_n \}$ and $V_{\Gamma} (h_2 )$ is equal to
either $\{ P_n , Q_1 , \ldots , Q_{n-1} \}$ or else $\{ Q_1 , \ldots , Q_{n-1} \}$ \\
\end{enumerate}

\noindent \textsl{Case 1.} Suppose that $V_{\Gamma} (h_2 ) = \{ P_n
, Q_1 , \ldots , Q_{n-1} \}$. We may assume that $P_n , Q_1 , \ldots
, Q_{n-1}$ are the first $n$ coordinate points of $\P^n$ in their order.
Choose $h \in S_1$ such that $V_{\Gamma} (h) = \{P_1 , \ldots ,
P_{n-1} , Q_n\}$. Note that
\begin{equation*}
hh_2 \in \Phi (\Gamma)_2 \quad \mbox{and} \quad h(P) \neq 0 \quad \mbox{for every} \quad P \in V_{\Gamma} (h_2 ).
\end{equation*}
Now, consider the linear form $h'  = b_1 x_1 + \cdots + b_{n-1} x_{n-1}$ where
\begin{equation*}
b_i := \frac{h_1(Q_i)h_3(Q_i)}{h (Q_i)} \quad \mbox{for} \quad
i=1,\ldots ,n-1.
\end{equation*}
Then one can easily check that $G := h_1 h_3 - h h'$ is an element
of $I(\Gamma)_2$. Thus $G$ is a $k$-linear combination of elements
of $\Phi (\Gamma)_2$ by Theorem \ref{thm:d=mn}. Then
\begin{equation*}
F = h_1 h_2 h_3 = h_2 G + h h_2 h'
\end{equation*}
is contained in the subspace generated by $S_1 \Phi (\Gamma)_2$
since $hh_2 \in \Phi (\Gamma)_2$. \\

\noindent \textsl{Case 2.} Suppose that $V_{\Gamma} (h_2 ) = \{Q_1 ,
\ldots , Q_{n-1} \}$. Letting
\begin{equation*}
A=\{Q_1,\cdots,Q_{n-1},P_1\} \quad \mbox{and} \quad
B=\{Q_1,\cdots,Q_{n-1},P_2\},
\end{equation*}
it holds by Lemma \ref{lem:induction} that
\begin{equation*}
I(\{Q_1,\cdots,Q_{n-1}\})_1 = I(A)_1 + I(B)_1 .
\end{equation*}
In particular, we can write $h_2$ as
\begin{equation*}
h_2 = h_{2,1} + h_{2,2} \quad \mbox{for some} \quad h_{2,1} \in
I(A)_1 \quad \mbox{and} \quad h_{2,2} \in I(B)_1.
\end{equation*}
Since both of $h_1 h_{2,1} h_3$ and $h_1 h_{2,2} h_3$ correspond to \textsl{Case 1}, we can conclude that
\begin{equation*}
h_1 h_2 h_3 = h_1 h_{2,1} h_3 + h_1 h_{2,2} h_3
\end{equation*}
is contained in the subspace generated by $S_1 \Phi (\Gamma)_2$.
\end{proof}

\begin{lemma}\label{lem:mu=d-1 case}
Let $\Gamma \subset \P^n$ be a finite set of $mn$ points in linearly general position for some $m \geq 3$ and let $F \in \Phi(\Gamma)_{m+1}$ be such that $\mu(F)=mn-1$. Then $F$ is contained in the $k$-vector space generated by $S_1 \Phi (\Gamma)_{m}$.
\end{lemma}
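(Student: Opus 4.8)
The plan is to reduce the assertion to the case $m=2$, which is Lemma~\ref{lem:main_1}, by extracting from $\Gamma$ a $2n$-point subconfiguration tailored to $F$ and carrying the remaining linear factors along unchanged.

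Write $F=h_1\cdots h_{m+1}$ with $h_i\in S_1-\{0\}$ and put $V_i:=V_\Gamma(h_i)$. Since $\mu_\Gamma(F)=mn-1$, after relabeling we may assume $\lambda_\Gamma(h_1\cdots h_m)=mn-1$, so $V_1\cup\cdots\cup V_m=\Gamma-\{R\}$ for a unique point $R\in\Gamma$; because $F\in I(\Gamma)$ while $h_i(R)\neq0$ for $i\le m$, we have $R\in V_{m+1}$. The first step is a short combinatorial count: each $|V_i|\le n$ by linearly general position, and $\big|\bigcup_{i=1}^m V_i\big|=mn-1$, so comparing $\sum_{i=1}^m|V_i|$ with this cardinality shows that either all the $V_i$ are pairwise disjoint with exactly one of them having $n-1$ elements and the rest $n$, or else all of them have $n$ elements and exactly one pair of them meets in a single point. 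In either case we may pick indices $a\neq b$ in $\{1,\dots,m\}$ with $|V_a\cup V_b|=2n-1$.

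Now set $\Delta:=V_a\cup V_b\cup\{R\}$. Since $R\notin\bigcup_i V_i$, the point $R$ lies outside $V_a\cup V_b$, so $\Delta$ has exactly $2n$ points; being a subset of $\Gamma$, it is in linearly general position. I would then observe that $h_a h_b h_{m+1}\in\Phi(\Delta)_3$: one has $V_\Delta(h_a h_b)=V_a\cup V_b=\Delta-\{R\}$ and $R\in V_\Delta(h_{m+1})$, so the product vanishes on all of $\Delta$. Moreover $\mu_\Delta(h_a h_b h_{m+1})\ge\lambda_\Delta(h_a h_b)=2n-1$. If this maximum is $2n$, then one of $h_a h_{m+1}$, $h_b h_{m+1}$ already lies in $\Phi(\Delta)_2$ and $h_a h_b h_{m+1}$ visibly lies in the span of $S_1\Phi(\Delta)_2$; if it is $2n-1$, the same conclusion follows from Lemma~\ref{lem:main_1} applied to $\Delta$ in place of $\Gamma$. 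In either case we obtain an expression $h_a h_b h_{m+1}=\sum_j g_j G_j$ with $g_j\in S_1$ and $G_j\in\Phi(\Delta)_2$.

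It remains to reassemble $F$. Each of the $m-2$ forms $h_i$ with $i\le m$, $i\neq a,b$, vanishes on $V_i$, and each $G_j$ vanishes on $\Delta$; since $\big(\bigcup_{i\le m,\,i\neq a,b}V_i\big)\cup\Delta=\big(\bigcup_{i=1}^m V_i\big)\cup\{R\}=\Gamma$, the completely decomposable $m$-form $\big(\prod_{i\le m,\,i\neq a,b}h_i\big)G_j$ lies in $I(\Gamma)$, hence in $\Phi(\Gamma)_m$. Therefore
\[
F=\Big(\prod_{\substack{i\le m\\ i\neq a,b}}h_i\Big)h_a h_b h_{m+1}=\sum_j g_j\Big(\prod_{\substack{i\le m\\ i\neq a,b}}h_i\Big)G_j
\]
belongs to the $k$-vector space generated by $S_1\Phi(\Gamma)_m$, as required. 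The only point that needs real care is the opening combinatorial count — pinning down the possible overlap patterns of $V_1,\dots,V_m$ and checking that $\Delta$ has exactly $2n$ points and that $\bigcup_{i\neq a,b}V_i$ together with $\Delta$ recovers all of $\Gamma$; granted that, the reduction to Lemma~\ref{lem:main_1}, obtained by multiplying the identity for $h_a h_b h_{m+1}$ by the untouched factors, is automatic.
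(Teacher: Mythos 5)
Your proof is correct and follows essentially the same route as the paper: both arguments isolate two factors $h_a,h_b$ whose zero loci in $\Gamma$ cover $2n-1$ points, form the $2n$-point subset $\Delta=V_a\cup V_b\cup\{R\}$ (the paper's $\Gamma_0$), apply Lemma~\ref{lem:main_1} to the cubic $h_ah_bh_{m+1}$, and multiply the resulting identity by the untouched factors, observing that $\bigl(\prod_{i\neq a,b}h_i\bigr)\Phi(\Delta)_2\subseteq\Phi(\Gamma)_m$. Your extra check of the possibility $\mu_\Delta(h_ah_bh_{m+1})=2n$ (which the paper simply asserts equals $2n-1$) is a harmless and in fact slightly more careful touch, since that case is handled trivially.
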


\begin{proof}
Write $F= h_1 \cdots h_{m+1}$ where $h_1 ,\ldots , h_m, h_{m+1}$ are nonzero linear forms. Since $\mu(F)=d-1=mn-1$, we may assume that
\begin{equation*}
\mu_{\Gamma} (F)=\lambda_{\Gamma} (h_1\cdots h_m) = mn-1.
\end{equation*}
Thus the set $\nu_{\Gamma , F} (h_{m+1} ) = \Gamma - V_{\Gamma} (h_1\cdots h_m)$ consists of a single point, say $Q$. Note that $\lambda_{\Gamma} (h_i) \leq n$ for all $1 \leq i \leq m+1$ since $\Gamma$ is in linearly general position. By using (\ref{eq:4.1}) we have
\begin{equation*}
mn-1 = \lambda_{\Gamma} (h_1\cdots h_m)  \leq \sum_{i=1} ^m \lambda_{\Gamma} (h_i)  \leq mn.
\end{equation*}
If $\lambda_{\Gamma} (h_i)\leq n-1$ for more than two $i$'s, then
\begin{equation*}
\lambda_{\Gamma} (h_1\cdots h_m) \leq \sum_{i=1}^{m}\lambda_{\Gamma} (h_i)\leq mn-2.
\end{equation*}
Therefore, at least $m-1$ of $\lambda(h_1),\cdots,\lambda(h_m)$ must be equal to $n$. Without loss of generality, we may assume the followings:\\

\begin{enumerate}
\item[$(i)$] $\lambda_{\Gamma} (h_1 ) = \cdots = \lambda_{\Gamma} (h_{m-1} ) =n$ and $n-1 \leq \lambda_{\Gamma} (h_m ) \leq n$;
\item[$(ii)$] Either
   \begin{enumerate}
   \item[$(ii.1)$] $\lambda_{\Gamma} (h_m ) = n-1$ and $V_{\Gamma} (h_1 ) , \ldots , V_{\Gamma} (h_m )$ are pairwise disjoint
   \end{enumerate}
or else
   \begin{enumerate}
   \item[$(ii.2)$] $\lambda(h_m ) = n$, the intersection $V_{\Gamma} (h_{m-1} ) \cap V_{\Gamma} (h_m )$ is exactly a point, say $P$, and the sets
       \begin{equation*}
      V_{\Gamma} (h_1 ) , \ldots , V_{\Gamma} (h_{m-1} ) ,V_{\Gamma} (h_m ) - \{ P \}
       \end{equation*}
       are pairwise disjoint.\\
   \end{enumerate}
\end{enumerate}
Then, we have $\lambda_{\Gamma} (h_{m-1} h_m)=2n-1$. Note that the set
\begin{equation*}
\Gamma_0 := V_{\Gamma} (h_{m-1} h_m) \cup \{Q \} \subset \P^n
\end{equation*}
is a finite set of $2n$ points in linearly general position and hence
\begin{equation*}
G:=h_{m-1} h_m h_{m+1} \in \Phi (\Gamma_0 )_3 \quad \mbox{and} \quad \mu_{\Gamma_0} (G ) = \lambda_{\Gamma_0} (h_{m-1} h_m ) = 2n-1.
\end{equation*}
Thus, it holds by Lemma \ref{lem:main_1} that $G$ is contained in the $k$-vector space generated by $S_1 \Phi (\Gamma_0 )_2$. This shows that $F = h_1 \cdots h_{m-2} \times G$ is contained in the $k$-vector space generated by
\begin{equation*}
h_1 \cdots h_{m-2} \times S_1 \Phi (\Gamma_0 )_2 := \{ h_1 \cdots h_{m-2} \times g ~|~ g \in S_1 \Phi (\Gamma_0 )_2 \}.
\end{equation*}
Also the set
\begin{equation*}
h_1 \cdots h_{m-2} \times \Phi (\Gamma_0 )_2 := \{ h_1 \cdots h_{m-2} \times q ~|~ q \in \Phi (\Gamma_0 )_2 \}
\end{equation*}
is a subset of $\Phi (\Gamma)_m$ since $\Gamma$ is equal to the union $\Gamma_0 \cup V_{\Gamma} (h_1 \cdots h_{m-2})$. In consequence, it is shown that $F$ is contained in the $k$-vector space generated by $S_1 \Phi (\Gamma)_m$. This completes the proof.
\end{proof}

Now, we are ready to give a proof of Theorem \ref{thm:d=mn generation of the ideal}.\\

\noindent {\bf Proof of Theorem \ref{thm:d=mn generation of the ideal}.}
Throughout the proof, we will denote by $V$ the $k$-vector space generated by $S_1 \Phi (\Gamma)_m$.

By Theorem \ref{thm:generation}, the set $\Phi (\Gamma)_{m+1}$ generates $I(\Gamma)_{m+1}$ as a $k$-vector space. Thus it is enough to show that every element $F$ in $\Phi (\Gamma)_{m+1}$ is contained in $V$. To this aim, we use induction on the value
\begin{equation*}
\epsilon_{\Gamma} (F) := mn - \mu_{\Gamma} (F) \geq 0.
\end{equation*}
Let $F$ be an element of $\Phi (\Gamma)_{m+1}$. Thus $F= h_1 \cdots h_{m+1}$ for some nonzero linear forms $h_1 ,\ldots , h_{m}, h_{m+1} \in S_1$. Note that if $\epsilon_{\Gamma} (F) = 0$ then $F$ is already an element of $S_1 \Phi (\Gamma)_m$. Also if $\epsilon_{\Gamma} (F) = 1$ and hence $\mu_{\Gamma} (F) = mn-1$, then $F$ is contained in $V$ by Lemma \ref{lem:mu=d-1 case}.

Now, suppose that $\epsilon_{\Gamma} (F) \geq 2$. Without loss of generality, we may assume that
\begin{equation*}
\mu_{\Gamma} (F) = \lambda_{\Gamma} (h_1 \cdots h_m ).
\end{equation*}
Let $A$ denote the finite set $V_{\Gamma} ( h_1 \cdots h_m)$. Thus $\mu_{\Gamma} (F) = |A| \leq mn-2$. Then there exists $i \in \{1, \ldots , m \}$ such that
\begin{equation*}
|\nu_{\Gamma,F} (h_i )| \leq n-1
\end{equation*}
(cf. see (\ref{eq:basic inequality})). We may assume that $i=m$. Let
\begin{equation*}
\nu_{\Gamma,F} (h_m ) = \{ Q_1 , \ldots , Q_{\delta} \}
\end{equation*}
where $\delta = |\nu_{\Gamma,F} (h_i )| \leq n-1$. Now, choose two distinct points $P_1$ and $P_2$ in $\Gamma -A$. Then
\begin{equation*}
h_{m+1} (P_1 ) = h_{m+1} (P_2 )=0
\end{equation*}
since $F \in \Phi(\Gamma)_{m+1}$. Also, it holds by Lemma \ref{lem:induction} that
\begin{equation*}
I(\{ Q_1 , \ldots , Q_{\delta} \} )_1 = I(\{ Q_1 , \ldots , Q_{\delta} , P_1 \} )_1 + I(\{ Q_1 , \ldots , Q_{\delta} , P_2 \} )_1 .
\end{equation*}
In particular, we can write $h_m = h_{m,1} + h_{m,2}$ for some $h_{m,j} \in I(\{ Q_1 , \ldots , Q_{\delta} , P_j \} )$. Then
\begin{equation*}
F = G_1 + G_2
\end{equation*}
where $G_j := h_1 \cdots h_{m-1} h_{m,j} h_{m+1}$ for each $j=1,2$. Also one can easily check that $G_1$ and $G_2$ are contained in $\Phi (\Gamma)_{m+1}$. Furthermore, it holds that
\begin{equation*}
\mu_{\Gamma} (G_j ) \geq \lambda_{\Gamma} (h_1 \cdots h_{m-1} h_{m,j}) \geq \lambda_{\Gamma} (h_1 \cdots h_{m-1} h_m) +1 = \mu_{\Gamma} (F) +1
\end{equation*}
since $h_{m,j} (P_j )=0$ while $h_m (P_j ) \neq 0$. Now, by induction hypothesis it follows that $G_1$ and $G_2$ are contained in $V$. This implies that $F = G_1 +G_2$ is an element of $V$ and hence $V$ is equal to $I(\Gamma)_{m+1}$. \qed \\

\section{Proof of Main Theorem}
\noindent This section is devoted to giving a proof of Theorem \ref{thm:main}. We begin with the following lemma which says that in order to prove Theorem \ref{thm:main} it suffices to consider only the degree $m$ and $m+1$ pieces of $I(\Gamma)$.

\begin{lemma}\label{lem:equivalet condition}
Let $\Gamma \subset \P^n$ be a finite set of $d$ points in linearly general position. Suppose that $d \leq mn$. Then the following two statements are equivalent:
\begin{enumerate}
\item[$(i)$] $I(\Gamma)$ is generated by forms of degree $\leq m$.
\item[$(ii)$] $\Phi (\Gamma)_{m+1}$ is contained in the $k$-vector space generated by the set
\begin{equation*}
S_1 \Phi (\Gamma)_{m} := \{ h F ~|~ h \in S_1 \quad \mbox{and} \quad F \in \Phi (\Gamma)_{m} \}.
\end{equation*}
\end{enumerate}
\end{lemma}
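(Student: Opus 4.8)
The plan is to use Castelnuovo--Mumford regularity to reduce statement (i) to a statement about the single graded piece $I(\Gamma)_{m+1}$, and then to translate that statement into the language of completely decomposable forms by invoking Theorem~\ref{thm:generation} in degrees $m$ and $m+1$.

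For the reduction, I would first recall that, since $\Gamma$ consists of $d \leq mn$ points in linearly general position, it is $m$-normal (a set of at most $mn+1$ points in linearly general position is $m$-normal; cf. the proof of Lemma~\ref{lem:induction}) and therefore $(m+1)$-regular in the sense of Castelnuovo--Mumford. Hence $I(\Gamma)$ is generated by forms of degree $\leq m+1$, so (i) holds if and only if $I(\Gamma)_{m+1}$ is contained in the ideal $\langle I(\Gamma)_{\leq m}\rangle$. Examining the degree $m+1$ component, for each $j \leq m$ one has $S_{m+1-j}\,I(\Gamma)_j = S_1 \cdot \bigl( S_{m-j}\,I(\Gamma)_j \bigr) \subseteq S_1 \cdot I(\Gamma)_m$, so the degree $m+1$ component of $\langle I(\Gamma)_{\leq m}\rangle$ equals the $k$-vector space spanned by $S_1 \cdot I(\Gamma)_m$. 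Thus (i) is equivalent to the equality $I(\Gamma)_{m+1} = \langle S_1 \cdot I(\Gamma)_m \rangle_k$, where $\langle\,\cdot\,\rangle_k$ denotes $k$-linear span.

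Next I would bring in Theorem~\ref{thm:generation}, which applies in degrees $m$ and $m+1$ because $d \leq mn$ forces $\lceil d/n \rceil \leq m \leq m+1$: it gives $I(\Gamma)_m = \langle \Phi(\Gamma)_m \rangle_k$ and $I(\Gamma)_{m+1} = \langle \Phi(\Gamma)_{m+1} \rangle_k$. Writing each element of $I(\Gamma)_m$ as a $k$-combination of elements of $\Phi(\Gamma)_m$ and multiplying by a linear form shows that the $k$-span of $S_1 \cdot I(\Gamma)_m$ coincides with $V := \langle S_1 \Phi(\Gamma)_m \rangle_k$. Since $S_1 \Phi(\Gamma)_m \subseteq I(\Gamma)_{m+1}$ we always have $V \subseteq I(\Gamma)_{m+1}$, so by the previous step (i) holds if and only if $I(\Gamma)_{m+1} \subseteq V$; and since $I(\Gamma)_{m+1} = \langle \Phi(\Gamma)_{m+1} \rangle_k$, a $k$-subspace contains $I(\Gamma)_{m+1}$ precisely when it contains the spanning set $\Phi(\Gamma)_{m+1}$. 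Hence (i) is equivalent to $\Phi(\Gamma)_{m+1} \subseteq V$, which is exactly (ii).

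The whole argument is essentially linear-algebra bookkeeping, so I do not expect a serious obstacle; the one point deserving care is the regularity reduction in the second paragraph, where one must be sure that $(m+1)$-regularity genuinely rules out minimal generators of $I(\Gamma)$ in degrees $\geq m+2$, so that controlling $I(\Gamma)_{m+1}$ suffices to control the whole ideal. The passage between $I(\Gamma)$-spans and $\Phi(\Gamma)$-spans in the third paragraph is then routine.
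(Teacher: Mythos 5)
Your proposal is correct and follows essentially the same route as the paper: both use the $(m+1)$-regularity of $\Gamma$ (coming from $d\leq mn$ points in linearly general position) to reduce statement $(i)$ to the equality $I(\Gamma)_{m+1}=\langle S_1\,I(\Gamma)_m\rangle_k$, and then invoke Theorem~\ref{thm:generation} in degrees $m$ and $m+1$ to replace $I(\Gamma)_m$ and $I(\Gamma)_{m+1}$ by the spans of $\Phi(\Gamma)_m$ and $\Phi(\Gamma)_{m+1}$, which turns that equality into statement $(ii)$. Your write-up is in fact slightly more explicit than the paper's (e.g.\ in justifying that the degree-$(m+1)$ piece of the ideal generated by $I(\Gamma)_{\leq m}$ is the span of $S_1\,I(\Gamma)_m$), but the underlying argument is the same.
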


\begin{proof}
It holds by Theorem \ref{thm:generation} that $I(\Gamma)_m$ and $I(\Gamma)_{m+1}$ are respectively generated by $\Phi (\Gamma)_m$ and $\Phi (\Gamma)_{m+1}$ as $k$-vector spaces.

$((i) \Rightarrow (ii)$ : If $I(\Gamma)$ is generated by forms of degree $\leq m$ then the $k$-vector space generated by $S_1 I(\Gamma)_{m}$ is equal to $I(\Gamma)_{m+1}$. Therefore $(i)$ implies $(ii)$.

$(ii) \Rightarrow (i)$ : Since $d \leq mn+1$, we know that the Castelnuovo-Mumford regularity of $\Gamma$ is at most $m+1$ and hence $I(\Gamma)$ is generated by forms of degree $\leq m+1$. So, it needs to show that $I(\Gamma)_{m+1}$ is spanned by the set
\begin{equation*}
S_1 I(\Gamma)_m := \{ h F ~|~ h \in S_1 \quad \mbox{and} \quad F \in I(\Gamma)_{m} \}.
\end{equation*}
By our assumption $(ii)$, it follows that $S_1 \Phi (\Gamma)_{m}$ generates $I(\Gamma)_{m+1}$ as a $k$-vector space. This completes the proof that $(ii)$ implies $(i)$.
\end{proof}

Now, we give a proof of our main theorem in the present paper. \\

\noindent {\bf Proof of Theorem \ref{thm:main}.} Let $V$ denote the $k$-vector space generated by $S_1 \Phi (\Gamma)_m$.

First we will show that $\Phi (\Gamma)_{m+1}$ is a subset of $V$ .To this aim, let us write $d = mn-\epsilon$ where $\epsilon$ is a nonnegative integer. We use induction on $\epsilon$. If $\epsilon = 0$ and so $d=mn$, then we are done by Theorem \ref{thm:d=mn generation of the ideal}. Now, suppose that $\epsilon >0$ and choose two distinct points $x,y \in \P^n - \Gamma$ such that $\Gamma_0 := \Gamma \cup \{x ,y \}$ is in linearly general position. Let
\begin{equation*}
\Gamma_1 := \Gamma \cup \{x \} \quad \mbox{and} \quad \Gamma_2 := \Gamma \cup \{y \}.
\end{equation*}
Then
\begin{equation*}
|\Gamma_1 | = |\Gamma_2 | = d+1 \leq mn
\end{equation*}
and hence it holds by induction hypothesis that $I(\Gamma_1 )_{m+1}$ and $I(\Gamma_2 )_{m+1}$ are respectively generated by $S_1 \Phi (\Gamma_1 )_m$ and $S_1 \Phi (\Gamma_2 )_m$. Also
\begin{equation*}
I(\Gamma)_{m+1} = I(\Gamma_1 )_{m+1} + I(\Gamma_2 )_{m+1}
\end{equation*}
by Lemma \ref{lem:induction}. Therefore $I(\Gamma)_{m+1}$ is generated by the union of $S_1 \Phi (\Gamma_1 )_m$ and $S_1 \Phi (\Gamma_2 )_m$. Obviously,
$S_1 \Phi (\Gamma )_m$ contains the union
\begin{equation*}
S_1 \Phi (\Gamma_1 )_m \cup S_1 \Phi (\Gamma_2 )_m
\end{equation*}
In consequence, it is shown that $I(\Gamma)_{m+1}$ is generated by $S_1 \Phi (\Gamma )_m$. In particular, $\Phi (\Gamma )_{m+1}$ is contained in $V$, the $k$-vector space spanned by $S_1  \Phi (\Gamma )_m$.

Now, by Lemma \ref{lem:equivalet condition}, it follows that $I(\Gamma)$ is generated by forms of degree $\leq m$ and hence
\begin{equation*}
I(\Gamma) = \langle I(\Gamma)_{\leq m-1} ,I(\Gamma)_m \rangle .
\end{equation*}
Since we know already that $I(\Gamma)_m$ is generated by $\Phi (\Gamma)_m$ as a $k$-vector space, it holds that $I(\Gamma) = \langle I(\Gamma)_{\leq m-1} ,\Phi (\Gamma)_m \rangle$. This finishes the proof.
\qed \\

\begin{remark}
Let $\Gamma \subset \P^n$ be a finite set of $d$ points in linearly general position such that
\begin{equation*}
(m-1)n \leq d \leq mn \quad \mbox{for some} \quad m \geq 2.
\end{equation*}
So, Theorem \ref{thm:main} says that
\begin{equation*}
I(\Gamma) = \langle I(\Gamma)_{\leq m-1} ,\Phi (\Gamma)_m \rangle .
\end{equation*}
Here we want to explain how to find an explicit finite generating set of $\Phi (\Gamma)_m$.
 \renewcommand{\descriptionlabel}[1]%
{\hspace{\labelsep}\textrm{#1}}
\begin{description}
    \setlength{\labelwidth}{13mm}
    \setlength{\labelsep}{1.5mm}
    \setlength{\itemindent}{0mm}

\item[$(1)$] Suppose that $d = mn$. Then there is a one-to-one correspondence between $[\Phi (\Gamma)_m ]$ and the set of partitions of $\Gamma$ as the union of $m$ subsets having exactly $n$ elements. For details, see the proof of Lemma \ref{lem:deg splitting variety}. So, we get a subset $\Sigma (\Gamma)$ of $\Phi (\Gamma)_m$ consisting of $\frac{(m n)!}{(n! )^{m} \times (m ! )}$ completely decomposable $m$-forms (cf. Notation and Remarks \ref{n and r:splitting variety}(3). Then it is shown in the proof of Theorem \ref{thm:d=mn} that $\Sigma (\Gamma)$ generates $I(\Gamma)_m$.

\item[$(2)$] Suppose that $d = mn-\epsilon$ for some $\epsilon \in \{1,\ldots , n-1 \}$. Then choose distinct $(\epsilon+1)$-points $x_1 , \ldots , x_{\epsilon+1}$ in $\P^n - \Gamma$ such that
    \begin{equation*}
    \Gamma_0 :=  \Gamma \cup \{ x_1 , \ldots , x_{\epsilon+1} \}
    \end{equation*}
is in linearly general position. Now, let
\begin{equation*}
\Gamma_j := \Gamma_0 - \{x_j \} \quad \mbox{for} \quad 1 \leq j \leq \epsilon +1.
\end{equation*}
Then $|\Gamma_j| =mn$ for every $ 1 \leq j \leq \epsilon +1$. Also one can show that
\begin{equation*}
\Gamma = \Gamma_1 \cap \cdots \cap \Gamma_{\epsilon} \cap \Gamma_{\epsilon+1}
\end{equation*}
and hence
\begin{equation*}
I(\Gamma)_m = I(\Gamma_1 )_m + \cdots + I(\Gamma_{\epsilon} )_m + I(\Gamma_{\epsilon+1} )_m
\end{equation*}
(cf. Lemma \ref{lem:induction}). Also, by $(1)$, $I(\Gamma_j )_m$ is generated by $\Sigma (\Gamma_j )$ consisting of $\frac{(m n)!}{(n! )^{m} \times (m ! )}$ completely decomposable $m$-forms for each $1 \leq j \leq \epsilon +1$. In consequence, we have the set
\begin{equation*}
\Sigma (\Gamma_1 ) \cup \cdots \cup \Sigma (\Gamma_{\epsilon} ) \cup \Sigma (\Gamma_{\epsilon+1} )
\end{equation*}
consisting of at most $(\epsilon+1) \times \frac{(m n)!}{(n! )^{m} \times (m ! )}$ completely decomposable $m$-forms which generates $I(\Gamma)_m$ as a $k$-vector space.
\end{description}
\end{remark}

\end{document}